\DeclareMathOperator{\diag}{diag}
\newcommand{\R}{\mathbb{R}}
\newcommand{\yi}{\mathbf{1}}
\newcommand{\N}{\mathcal{N}}
\newcommand{\G}{\mathcal{G}}
\newtheorem{theorem}{Theorem}
\newtheorem{corollary}{Corollary}
\newtheorem{lemma}{Lemma}
\newtheorem{assumption}{Assumption}
\newtheorem{example}{Example}
\newtheorem{remark}{Remark}
\def\endproof{\hspace*{\fill}~\IEEEQEDclosed\par\endtrivlist\unskip}
\title{Robustness Analysis of Asynchronous Sampled-Data Multi-Agent Networks With Time-Varying Delays\thanks{This work was supported by the National Natural Science Foundation of
China under Grant 61422302, the Program for New Century Excellent
Talents in University under Grant NCET-13-0178, and the 111 Project under Grant B17048.}}
\author{Feng~Xiao, {\it Member, IEEE}, ~Yang~Shi, {\it Fellow, IEEE}, ~Wei~Ren, {\it Fellow, IEEE}
\thanks{F. Xiao is with the School of Control and Computer Engineering,
North China Electric Power University, Beijing 102206, China (e-mail:
fengxiao@ualberta.net).}
\thanks{Yang Shi is with the Department of Mechanical
Engineering, University of Victoria,
Victoria, BC,  V8N 3P6, Canada (e-mail: yshi@uvic.ca).}
\thanks{Wei Ren is with the Department of Electrical and Computer Engineering,
University of California, Riverside, CA 92521 USA (email:~{ren@ee.ucr.edu}).}
}
\begin{document}
\date{}
\maketitle

\begin{abstract}
In this paper, we study the simultaneous stability problem of a finite number of locally inter-connected linear subsystems under practical constraints, including asynchronous and aperiodic sampling, time-varying delays, and measurement errors. We establish a new Lyapunov-based stability result for such a decentralized system. This system has a particular simple structure of inter-connections, but it captures some key characteristics of a large class of intermediate models derived from the consensus analysis of multi-agent systems. The stability result is applicable to the estimation of the maximum allowable inter-sampling periods and time delays based on individual dynamics and coupling structures in the scenarios of consensus control via asynchronous sampling of relative states and asynchronous broadcasting of self-sampled states respectively. The asynchrony of aperiodic sampling and the existence of measurement errors allow the utilization of some kinds of quantizing devices, such as Logarithmic quantizers, in the process of data sampling, and allow the introduction of a period of dwell time after each update of state measurement to eliminate the Zeno behavior of events in event-based control. The extension in the case with input saturations and input delays is also discussed.
 \end{abstract}

\begin{keywords}Asynchronous multi-agent systems, simultaneous stability, consensus, aperiodic sampling, time-varying delays, event-triggered control.
\end{keywords}

\section{Introduction}
Due to the attractive advantages in signal processing and transmission, digital devices have found their wide applications in modern control systems\cite{Dorf_book_mcs}. An analog-to-digital converter  is responsible for converting a continuous-time signal into a digital signal by sampling and quantization. A digital controller gathers all input signals and compute an output to achieve the desired control purpose. Sampled-data control deals with such hybrid continuous-time and discrete-time systems and usually assumes that these digital devices share the same clocks and process their data periodically and synchronously\cite{Chen_book}. This ideal assumption can formulate the considered digital systems as standard LTI discrete-time systems by plant discretization and servers as the basis of many fundamental results, for example, in stability and optimal control\cite{Francis_tac1988,Bamieh_tac1992}. When these digital components, such as A/D samplers and  D/A zero-order holds, work at different frequencies or they are spatially scattered in a large area without a central clock, the sampled-data systems become inherently asynchronous\cite{Voulgaris_cdc1993}. Although synchronous sampled-data models may be used to approximate the asynchronous ones in some of such cases\cite{Ritchey_tac1989}, this treatment could sacrifice some dynamic  details in plants.

{\it Literature Review:} Asynchronous sampling has been brought to the attention of researchers in the control community for more than three decades\cite{Sridharan_auto1985}. It has obtained enormous results, and is now developing rapidly with the growth of multi-agent theory as one of the main research areas. Generally, asynchronous sampling can be found in the following three types of systems:

 (1) Multi-rate systems. Multi-rate sampling is one of the earliest motivating examples for studying asynchronous systems, where sampling and hold elements work periodically at different rates with irrational ratios\cite{Ritchey_tac1989,Voulgaris_cdc1993,Fang_acc1994,Sagfors_auto1997,Moarref_auto2014LMIs}. Examples include dual-rate linear systems with a single sampler and a single hold \cite{Voulgaris_cdc1993,Sagfors_auto1997} and multi-rate linear systems with multiple asynchronous samplers and zero-order holds\cite{Ritchey_tac1989,Fang_acc1994,Moarref_auto2014LMIs}. The corresponding control problems include the optimal LQG control\cite{Voulgaris_cdc1993,Sagfors_auto1997} and stability problems\cite{Ritchey_tac1989,Fang_acc1994,Moarref_auto2014LMIs}.

(2) Networked control systems (NCSs). Theoretically, some NCSs can be modeled as multi-rate systems. Due to the long-distance transmission of information, communication networks in NCSs may suffer from  packet loss and time delays\cite{HuLS_auto2007,Wangxf_scl2012,CaoM_auto2014,Tavassoli_tac2014LMIs,Ge_sp2016LMIs} and often have multiple independent information channels\cite{Wangxf_scl2012,Antunes_auto2013,Tavassoli_tac2014LMIs,Ge_sp2016LMIs}. To ensure effective information transmission with reduced costs, asynchronous event-triggered sampling also has been considered in NCSs\cite{Wangxf_scl2012,CaoM_auto2014}

(3) Multi-agent systems. The asynchronous property in signal processing becomes more prominent in multi-agent networks, in which, finite numbers of subsystems, equipped with independent signal-sensing and data-processing devices, are inter-connected to perform some cooperative tasks\cite{Tsitsiklis_tac1986,Lin_cdc2004}. For example, in the formation control of multi-robot systems, each robot should detect the positions and velocities of other adjacent robots for route planning. However, the installed sensors, such as ultrasonic sensors or laser sensors, usually cannot monitor all objects in $360$-degree coverage at a time. Furthermore, they could also be affected by environmental interference. So in such cases, it is practically preferable that each  robot collects the data of its neighbors in some order (not necessarily periodically) according to its own schedule and within its sensing/processing capacities. These kinds of realistic scenarios raise the problem of asynchronous and aperiodic sampling. The sensor scheduling can be time-driven\cite{Cao_tac2008,Gao_ijc2010,Gao_tac2011}, event-driven\cite{Dimarogonas_tac2012}, or mixture (of time and event)-driven\cite{Heemels_tac2013,Xiao_scl2016}.

With asynchronous sampling, even if the involved digital devices operate periodically, sampled-data systems become non-periodic and time-varying \cite{Sagfors_auto1997}. Asynchronous sampling can destroy the stability of systems which are stable in synchronous environments \cite{Fang_acc2005}, and induce additional time-varying delays in system analysis\cite{Freirich_auto2016LMIs}. So the analysis of asynchronous systems is more difficult and challenging than their synchronous counterparts, and the asynchronous mechanism also sets a strict requirement on the robustness of designed controllers or algorithms with respect to aperiodic sensor scheduling, sensing errors and processing/communcation delays. In asynchronous multi-agent systems, most results have been developed based on the simple individual models of  single-integrators\cite{Xiao_scl2016} and double-integrators\cite{Cao_tac2008,Meng_auto2013,Zhan_TCSI2015}, but relatively few studies have been done on the general linear models, with the exception of the event-triggered control\cite{Zhu_auto2014,Hu_ta2016,Yang_auto2016}.

{\it Contributions:} This paper will establish a Lyapunov-based stability result for an asynchronous multi-agent system with the consideration of the above-mentioned  practical issues and then apply it to solving several representative asynchronous consensus problems in one framework. The contributions are summarized as follows:

First, we set up a basic stability model for asynchronous coordination of multi-agent systems and  solve the problem of how to estimate the maximum length of sampling intervals by subsystem matrices and inter-connection structures.  In the model, all subsystems, represented by the general linear state-space models, share the same system matrix and feedback matrix. They communicate with each other via discrete-time signals produced by samplers and zero-order holds. Examples of such a model without sampling can be found in the coordination analysis of a large range of multi-agent systems\cite{Xiao_iet2007,Qin_tcns2015}. We impose  relaxed assumptions on the A/D and D/A devices and information channels, including asynchronous and aperiodic sampling, measurement errors, and time-varying delays. The proposed result well describes the robustness of the asynchronous sampled-data system based on the stability of the continuous-time system without sampling.  In \cite{Freirich_auto2016LMIs}, by a Lyapunov-Krasovskii method and LMIs, the authors studied the protocol design problem of a similar model. But it is different from ours in the following two aspects: (1) in our model,  given any inter-connection structure,  the subsystems are inherently coupled by discrete-time signals;  in \cite{Freirich_auto2016LMIs}, the subsystems are coupled by continuous-time signals and the feedback matrices and scheduling protocols for discrete-time signals need to be designed accordingly; (2) In our model, the information is aperiodically sampled with measurement errors and transmitted with time delays; in \cite{Freirich_auto2016LMIs}, the information is sampled and transmitted according to the proposed Round-Robin (RR) or Try-Once-Discard (TOD) protocol. Note that the sampling with time-varying periods is also referred to as asynchronous sampling in \cite{Seuret_auto2012LMIs,Jiang_wdecns2010LMIs,Omran_adhs12LMI} but it is  a different definition from the ``asynchronous sampling"  in this paper.

Second, we solve the sampled-data consensus problems of the following asynchronous multi-agent systems in one framework: (1) networks of general linear agents with asynchronous sampling of relative states; (2) networks of single-integrators and marginally stable systems with asynchronous  broadcasting of self-sampled states. Feedbacks with relative states and broadcast communication have been widely used in the consensus and formation control of multi-agent systems; particularly, broadcasting is an important way of keeping the state average unchanged; see \cite{saber_tac2004,Cortes_auto2009,Guo_auto2013,Tassiulas_auto1999,Wang_acc2008,Nedic_tac2011,Seyboth_auto2013,Meng_auto2013,Wang_neuro2016} and references therein. Our work on asynchronous broadcasting is partly inspired by this observation and the protocol design method presented for the  event-based control of single-integrator and double-integrator agents in \cite{Seyboth_auto2013}. Due to the challenging difficulties of asynchronous consensus analysis, the existing results on sampled-data consensus mainly focus on synchronous sampling; that is, all data should be sampled at the same time with constant or variant frequencies. The involved individual models are usually single-integrators\cite{Xie_acc2009}, double-integrators\cite{Zhang_tac2010,Cao_ijc2010,Qin_tac2012}, or second-order oscillators\cite{ZhangH_scl2012}. There are relatively rare reports on asynchronous sampled-data consensus of general linear agents and they mostly deal with event-triggered consensus \cite{Zhu_auto2014,Hu_ta2016,Yang_auto2016}, which is intrinsically different from the time-driven style of asynchrony in this paper.  Our work is also different from the previous results based on dynamic outputs of  controllers \cite{Cao_tac2008,Zhan_TCSI2015}.

Finally, taking advantages of asynchronous sampling and allowable measurement errors, we can extend the obtained results  further to deal with some of the general cases with quantization in discretizing continuous-time signals, and we rigorously  prove the effectiveness of the method for eliminating the Zeno behavior by introducing a dwell time after each measurement in event-based control. To the best of our knowledge, the related problems on general linear multi-agent systems have not been studied in the existing literature. This extension is illustrated by the implementation of Logarithmic quantizers \cite{Aldajani_dsp2008} and  event-triggering conditions based on state errors \cite{Wang_acc2008,Heemels_tac2013}. We also show the application of the results in  the case with input saturations and input delays.

This paper is organized as follows. In Section II, we set up the asynchronous sampled-data model and give sufficient conditions for  stability by a Lyapunov-based approach. In Section III, the stability result is applied to solving some asynchronous consensus problems. In Section IV, further extensions of the stability result are discussed. The paper is concluded in Section V.

{\it Notations:} $I_n$ denotes the identity matrix in $\R^{n\times n}$; $\otimes$ denotes the Kronecker  product; $\yi_n=[1,1,\dots,1]^T\in\R^{n}$.

\section{Lyapunov-based asynchronous stability}
In this paper, we are interested in the protocol design and robustness/effectiveness analysis in coordinating agents' states in a general scenario of aperiodic asynchronous sampling and time-varying transmission delays.

Assume that there are $m$ subsystems  in a multi-agent system with zero-order holds and their states are represented by $z_i(t)$, $i=1,2,\dots,m$, respectively, in a common state space $\R^{N}$. The dynamics of subsystem $i$ is given as follows:
\begin{equation}\label{eq:sysi_z}
  \dot{z}_i(t)=Az_i(t)-\sum_{j=1}^m g_{ij}K\hat{z}_j(t),
\end{equation}
where $A\in\R^{N\times N}$ is the common system matrix of subsystems,  $g_{ij}$ represents the coupling weight,   $K\in \R^{N\times N}$ can be viewed as the common state feedback gain, and $\hat{z}_j(t)$ is the sampled state of subsystem $j$.

Let $t^i_k$, $k=0,1,2,\dots$, be the sampling instants of $z_i(t)$,  $\tau^i_k$,  $k=0,1,2,\dots$, be the corresponding time delays  in sampling, transmission, or computation, and  $e^i_k$,  $k=0,1,2,\dots$, be the measurement errors. The sampled state $\hat{z}_i(t)$, incorporating time delays, is given by \begin{equation*}
  \hat{z}_i(t)=z_i(t^i_k)-e^i_k,  t\in[t^i_k+\tau^{i}_k,t^i_{k+1}+\tau^{i}_{k+1}),~k=0,1,2,\dots,
\end{equation*}
which is a piece-wise constant function of time  $t$. Here, the sequences of $\{t^i_k\}$, $\{\tau^i_k\}$, and $\{e^i_k\}$\footnote{Notations $\{t^i_k\}$, $\{\tau^i_k\}$, and $\{e^i_k\}$ stand for the sequences $t^i_0$, $t^i_1$, $\dots$, $\tau^i_0$, $\tau^i_1$, $\dots$, and $e^i_0$, $e^i_1$, $\dots$, respectively.} are completely independent with respect to different $i$, and satisfy the following assumption:
 \begin{assumption} \label{ass:a1} For $k=0,1,2,\dots$,
 \begin{enumerate}
   \item[(1)]$t^{i}_{k+1}-t^{i}_k\leq h$;
   \item[(2)]$\tau^{i}_k<t^i_{k+1}-t^i_k$;
   \item[(3)] $\tau^{i}_k\leq \tau$;
   \item[(4)]  ${e^i_k}^Te^i_k\leq \omega {\hat{z}_i(t^i_k+\tau^i_k)}^T\hat{z}_i(t^i_k+\tau^i_k)$,
 \end{enumerate}
where $h$ is the maximum sampling period, $\tau$ is the maximum time delay, and $\omega\geq 0$ .
\end{assumption}

\begin{remark} \upshape
\begin{enumerate}
\item[(1)] The simultaneous stability model, with each individual represented by system \eqref{eq:sysi_z}, originates from the consensus coordination of multi-agent systems. Several examples will be given in Section III.   Furthermore, this model is also of interest in the networked situation where some individual systems cannot stabilize themselves by their sampled self-states especially when they cannot access their own states due to device failure.  Particularly, when $m=1$, system \eqref{eq:sysi_z} can be viewed a networked control system with aperiodic discrete-time signals and time-varying delays, which is also of its own significance \cite{Zhang_ics2001}.

\item[(2)] Assumption 1~(4) says that the measurement  error is of multiplicative type and proportional to the measurement. The additive errors can be modeled as uncertainties, which will be considered in Section III.
    \item[(3)] It can be shown that Assumption \ref{ass:a1}~(4) can be ensured by
    \begin{equation*}
      {e^i_k}^Te^i_k\leq \frac{\omega}{(1+\sqrt{\omega})^2} z_i(t^i_k)^Tz_i(t^i_k).
    \end{equation*}
\item[(4)]Note that the sequences of $\{t^i_k\}$, $\{\tau^i_k\}$, and $\{e^i_k\}$ are only indexed by the $i$th subsystem. So it can be understood that the information is transmitted via broadcasting. Another interpretation will be given in Section III.
\end{enumerate}
\end{remark}

 Denote $z(t)=[z_1(t)^T~ z_2(t)^T~\dots~z_m(t)^T]^T$ and $\hat{z}(t)=[\hat{z}_1(t)^T~\hat{z}_2(t)^T~\dots~\hat{z}_m(t)^T]^T$. The overall system is given by
\begin{equation}\label{eq:sys_z}
  \dot{z}(t)=(I_m\otimes A)z(t)-(G\otimes K)\hat{z}(t),
\end{equation}
where  $G=[g_{ij}]\in\R^{m\times m}$.

 Let $\lambda_{A_s}$ denote the largest eigenvalue of $(1/2)(A+A^T)$ and let $\sigma_{A}$, $\sigma_{G}$, and $\sigma_{K}$ denote  the largest singular values of $A$, $G$, and $K$, respectively. We have the following result:

\begin{theorem}\label{dl:main}
In system \eqref{eq:sysi_z}, assume that there exists a lower bounded function $V(t)$ with the following property:
\begin{equation}\label{eq:dVdtmain}
  \frac{dV(t)}{dt}\leq -\mu \hat{z}(t)^T\hat{z}(t)+\varepsilon (z(t)-\hat{z}(t))^T\!(z(t)-\hat{z}(t)),
\end{equation}
where $\mu>0$ and $\varepsilon>0$.
If Assumption \ref{ass:a1} holds for all $i$ and there exist some positive numbers $\alpha$ and $\beta$, such that
\begin{equation}\label{eq:dlmaincdtn}
\begin{split}
  \mu &- \varepsilon\omega(1+\frac{1}{\alpha})(1+\frac{1}{\beta}) e^{2\lambda_{A_s}(h+\tau)}\\
  &-\varepsilon\Big((1+\alpha)(1+\frac{1}{\beta}){\sigma_{A}}^2\\
  &+(1+\beta)\frac{7}{3}{\sigma_G}^2{\sigma_K}^2\Big)(h+\tau)^2e^{2\lambda_{A_s}(h+\tau)}>0,
\end{split}
\end{equation}
then
\begin{equation*}
 \lim_{t\to\infty}z(t)=0.
\end{equation*}
\end{theorem}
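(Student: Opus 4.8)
The plan is to reduce the whole statement to a single pointwise domination of the sampling error by the held signal. Since hypothesis \eqref{eq:dVdtmain} already gives $\dot V(t)\le -\mu\,\hat z(t)^T\hat z(t)+\varepsilon\,(z(t)-\hat z(t))^T(z(t)-\hat z(t))$, it suffices to establish an inequality of the form $\|z(t)-\hat z(t)\|^2\le C\,\|\hat z(t)\|^2$, where $C$ is exactly the bracketed coefficient of $\varepsilon$ in \eqref{eq:dlmaincdtn}, i.e.
\[
C=\omega\Big(1+\tfrac1\alpha\Big)\Big(1+\tfrac1\beta\Big)e^{2\lambda_{A_s}(h+\tau)}+\Big((1+\alpha)\big(1+\tfrac1\beta\big)\sigma_A^2+(1+\beta)\tfrac73\sigma_G^2\sigma_K^2\Big)(h+\tau)^2e^{2\lambda_{A_s}(h+\tau)}.
\]
Once this holds, \eqref{eq:dlmaincdtn} reads $\mu-\varepsilon C>0$, so $\dot V(t)\le -(\mu-\varepsilon C)\,\|\hat z(t)\|^2\le 0$, and the convergence follows at the end.

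First I would fix $i$ and a hold interval $[t^i_k+\tau^i_k,\,t^i_{k+1}+\tau^i_{k+1})$; these intervals are well ordered and $\hat z_i$ is unambiguous because Assumption \ref{ass:a1}(2) keeps the delayed activation times in sampling order. On such an interval $\hat z_i(t)=z_i(t^i_k)-e^i_k$ is constant and, by Assumption \ref{ass:a1}(1),(3), $t-t^i_k<h+\tau$. Writing $z_i(t)-\hat z_i(t)=\int_{t^i_k}^t A z_i(s)\,ds-\int_{t^i_k}^t\sum_j g_{ij}K\hat z_j(s)\,ds+e^i_k$, I would apply Young's inequality twice: once with parameter $\beta$ to split off the coupling integral (factor $1+\beta$) from the remaining ``self'' part (factor $1+\tfrac1\beta$), and once with $\alpha$ inside the self part to split the drift integral $\int A z_i$ (factor $1+\alpha$) from the measurement error $e^i_k$ (factor $1+\tfrac1\alpha$). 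This reproduces the three coefficient groups of \eqref{eq:dlmaincdtn}, with the error term controlled by Assumption \ref{ass:a1}(4), $\|e^i_k\|^2\le\omega\,\|\hat z_i(t)\|^2$.

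The heart of the argument is bounding the two integrals and the active held values over the interval in terms of the current held vector $\hat z(t)$. For this I would use the logarithmic-norm estimate from $\tfrac{d}{ds}\|z_i(s)\|\le\lambda_{A_s}\|z_i(s)\|+\|\sum_j g_{ij}K\hat z_j(s)\|$, whose integration over a window of length at most $h+\tau$ yields the factor $e^{\lambda_{A_s}(h+\tau)}$ and, after squaring, the common factor $e^{2\lambda_{A_s}(h+\tau)}$ attached to every group. The Cauchy--Schwarz bound $\|\int_{t^i_k}^t(\cdot)\,ds\|^2\le(t-t^i_k)\int_{t^i_k}^t\|\cdot\|^2\,ds\le(h+\tau)\int_{t^i_k}^t\|\cdot\|^2\,ds$ accounts for one factor $h+\tau$, and integrating a constant-in-$s$ bound supplies the second, giving the $(h+\tau)^2$ in the last two groups; collecting the coupling over $j$ with $\|G\|\le\sigma_G$, $\|K\|\le\sigma_K$ and using $\|A\|\le\sigma_A$ produces the $\sigma_G^2\sigma_K^2$ and $\sigma_A^2$ factors. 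Summing the per-subsystem bounds over $i$ then delivers $\|z(t)-\hat z(t)\|^2\le C\|\hat z(t)\|^2$.

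The main obstacle I expect is precisely this interval estimate. Within $[t^i_k,t]$ the active held values $\hat z_j(s)$ (and, during the delay window $[t^i_k,t^i_k+\tau^i_k)$, even $\hat z_i(s)$) are in general not the current values $\hat z_j(t)$, since each subsystem samples on its own asynchronous schedule with no positive lower bound on the inter-sample time; relating these mismatched, possibly multiply switched held values back to $\hat z(t)$ through the growth bound, and tracking how the common factor $e^{2\lambda_{A_s}(h+\tau)}$ distributes onto the error, drift, and coupling groups, is the delicate step. I expect the curious constant $\tfrac73$ to emerge from the worst-case accounting of this mismatch combined with the Cauchy--Schwarz and growth factors. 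Finally, with $\dot V\le-(\mu-\varepsilon C)\|\hat z\|^2$ and $V$ lower bounded, integration gives $\int_0^\infty\|\hat z(t)\|^2\,dt<\infty$; the pointwise bound $\|z(t)\|\le(1+\sqrt C)\,\|\hat z(t)\|$ (immediate from the key inequality) then puts $z\in L^2$ and bounds the trajectory, so that $\|z\|^2$ is uniformly continuous and Barbalat's lemma yields $\lim_{t\to\infty}z(t)=0$.
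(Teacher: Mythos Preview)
Your plan hinges on a \emph{pointwise} inequality $\|z(t)-\hat z(t)\|^2\le C\,\|\hat z(t)\|^2$ with the specific constant $C$ appearing in \eqref{eq:dlmaincdtn}. This inequality is in general false, and the obstacle you flag is fatal rather than merely delicate. Over the window $[t^i_k,t]$ the coupling term $\int_{t^i_k}^t e^{A(t-s)}(G_i\otimes K)\hat z(s)\,ds$ involves \emph{past} held values $\hat z_j(s)$, and nothing ties those to the \emph{current} held vector $\hat z(t)$: another subsystem may have resampled just before $t$, replacing a large $\hat z_j$ by a small one. A concrete counterexample is $A=0$, $\omega=0$, $\tau=0$, $m=2$, $N=1$, $K=1$, $G=\begin{bmatrix}0&1\\1&0\end{bmatrix}$: let $\hat z_1\equiv 0$ on $[t^1_k,t^1_{k+1})$, let $\hat z_2=M$ on $[t^1_k,t^1_k+h/2)$, and let subsystem~2 resample at $t^1_k+h/2$ to a value close to $0$. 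Just after $t^1_k+h/2$ one has $\|z(t)-\hat z(t)\|\gtrsim Mh/2$ while $\|\hat z(t)\|$ is arbitrarily small, so no bound of the form $\|z-\hat z\|^2\le C\|\hat z\|^2$ can hold.

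The paper's proof avoids this by never attempting a pointwise comparison. Instead it works entirely at the level of time integrals: writing $z_i(t)-\hat z_i(t)=(e^{A(t-t^i_k)}-I)\hat z_i+e^{A(t-t^i_k)}e^i_k-\int e^{A(t-s)}(G_i\otimes K)\hat z(s)\,ds$, it splits with Young's inequality (your $\alpha,\beta$ idea is right), then integrates over the hold interval and partitions the coupling integral along the global merged grid $\{t_l\}$ of all sampling/activation instants. The result is a bound of the form
\[
\int_{t_{p_0}}^{t}\|z(s)-\hat z(s)\|^2\,ds \;\le\; C\sum_{l}(t_{l+1}-t_l)\,\|\hat z(t_l)\|^2,
\]
which is exactly comparable to $\int\|\hat z\|^2\,ds=\sum_l(t_{l+1}-t_l)\|\hat z(t_l)\|^2$ because $\hat z$ is piecewise constant. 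The constant $7/3$ arises from this integral bookkeeping (via the elementary estimate $e^{2t}-4e^t+3+2t\le\tfrac{2}{3}t^3e^{2t}$ and the overlap count of windows that a single grid cell $[t_l,t_{l+1}]$ can belong to), not from any pointwise growth bound. The conclusion $z(t)\to 0$ is then obtained from $\sum_l(t_{l+1}-t_l)\|\hat z(t_l)\|^2<\infty$ by a separate variation-of-constants estimate on $z_i(t)-z_i(t^i_k+\tau^i_k)$, not via your route $\|z\|\le(1+\sqrt C)\|\hat z\|$, which is unavailable for the same reason.
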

\begin{proof}
The proof is based on the method of ``analytic synchronization" \cite{Lin_cdc2004,Cao_tac2008} and given in the Appendix.
\end{proof}
\begin{remark}
\begin{enumerate}
  \item[(1)] In \eqref{eq:dVdtmain}, if $\varepsilon\leq 0$, the stability analysis becomes trivial. So we only consider the case with positive $\varepsilon$.
  \item[(2)]  The left side of \eqref{eq:dlmaincdtn} is a continuous function with respect to variables  $h$ and $\tau$. If $h=\tau=0$, then the left side converges to $\mu-\varepsilon\omega$ as parameters $\alpha$ and $\beta$ approach to $\infty$. Therefore, when $\varepsilon\omega<\mu$, from \eqref{eq:dlmaincdtn}, it can be seen that we can always find maximum allowable sampling period $h$ and the corresponding maximum time delay $\tau$ to ensure the stability of the system. However, if $\varepsilon\omega\geq \mu$, then the decrease of $V(t)$ cannot be decided by \eqref{eq:dVdtmain} and thus the stability of system \eqref{eq:sys_z} cannot be decided either.
  \item[(3)]  By Theorem \ref{dl:main},  the maximum allowable $h$ and  $\tau$ can be calculated by \eqref{eq:dlmaincdtn}, where there exists a trade-off between  $\tau$ and $h$. The larger the $h$ is, the smaller the upper bound of allowable time delays is.
\end{enumerate}

\end{remark}

\section{Asynchronous consensus}
Consider a group of $n$ linear autonomous agents interacting with each other through local information flow.  Label these agents with $1$ to $n$ and suppose that the $i$-th agent takes the following dynamics:
\begin{equation}\label{sys:agenti}
  \dot{x}_i(t)=Ax_i(t)+Bu_i(t),
\end{equation}
where $x_i(t)\in\R^{N}$ is the state, $u_i(t)\in \R^{M}$ is an input signal, called {\it protocol} in multi-agent coordination and designed based on the information received from locally linked agents, and  system matrix $A$ and input matrix $B$ are with compatible dimensions.

\subsection{Asynchronous sampling of relative states}
Assume that the interaction topology is modeled by an undirected simple graph $\G$ with vertices $v_1,v_2,\dots,v_n$, which represent the $n$ agents respectively. Assume that there are total $m$ edges, labeled with $1$, $2$, $\cdots$, $m$. The existence of an edge $(v_i,v_j)$ represents that agents $i$ and $j$ have the capacity of knowing the relative state $x_i-x_j$ at the same time, and in such a case, suppose that  agents $i$ and $j$ sample the relative state $x_i-x_j$  at discrete times $t^{ij}_k$,  and they receive the data $x_i(t^{ij}_k)-x_j(t^{ij}_k)$ with  measurement error $e^{ij}_k$ at time $t^{ij}_k+\tau^{ij}_k$, $k=0,1,2,\dots$. Here, $t^{ij}_k=t^{ji}_k$, $\tau^{ij}_k=\tau^{ji}_k$ and $e^{ij}_k=-e^{ji}_k$. For different pair of adjacent agents, the initial sampling of relative states may start at different times, and their subsequent sampling instants are also independent. This is referred to as {\it asynchrony}, which is an intrinsic characteristic in  distributed networks without any global clocks to synchronize agents' actions. Our assumption about time delay $\tau^{ij}_k$ is also quite general.  Time delay $\tau^{ij}_k$ can be changing with respect to $k$ and independent of those on other channels. Let $\N_i$ denote the set of all $j$s, such that $(v_i,v_j)$ exists, and let $t^{ij}(t)=t^{ij}_k$, $e^{ij}(t)=e^{ij}_k$, $t\in[t^{ij}_k+\tau^{ij}_k, t^{ij}_{k+1}+\tau^{ij}_{k+1})$, $k=0,1,2,\dots$. In what follows, if $(v_i,v_j)$ is indexed by $p$ (the $p$-th edge), $t^p_k$, $\tau^p_k$,  and $t^p(t)$ are also used instead of  $t^{ij}_k$, $\tau^{ij}_k$,  and $t^{ij}(t)$.

 With the utilization of zero-order holds, the protocol $u_i(t)$ takes the following form:
\begin{equation}\label{eq:procedge}
  u_i(t)=K\!\sum_{j\in\N_i}(x_j(t^{ij}(t))-x_i(t^{ij}(t))+e^{ij}(t)).
\end{equation}
We will design the feedback matrix $K$ and give sufficient conditions in terms of sampling period and time delays to ensure that $\lim_{t\to\infty} (x_i(t)-(1/n)e^{At}\sum_{j=1}^nx_j(0))=0$, which implies that the system solves the average consensus problem\cite{saber_tac2004}.

 \begin{remark} In the protocol, we require that $t^{ij}_k=t^{ji}_k$, $\tau^{ij}_k=\tau^{ji}_k$ and $e^{ij}_k=-e^{ji}_k$, which ensure the symmetry of information sharing and are the least requirement for distributed average consensus. To get this symmetric information, each pair of adjacent agents should communicate beforehand to synchronize each mutual sampling and ensure the sampled data, like $x_i(t^{ij}_k)-x_j(t^{ij}_k)$, used in both controllers, is updated at the same time.  The synchronization of data sampling and controller update for any pair of adjacent agents is technically possible because it is only based on local communication.  In many engineering applications, such as formation control and attitude alignment, relative states (position, attitude, etc), as a whole, are easily obtainable, which is one of the reasons why we assume the same time delays on both parts of each relative state\cite{Cortes_auto2009}.
\end{remark}

\subsubsection{Edge dynamics\cite{Zelazo_cdc2007edge}}
Assign each edge an arbitrary direction in interaction topology $\G$ and define the $n\times m$ incidence matrix $D=[d_{ij}]$ by (see \cite{Godsil_book2001})
\begin{equation*}
  d_{ij}=\left\{
           \begin{array}{ll}
             1, & \text{if $v_i$ is the head of the oriented edge $j$,}\\
             -1, &  \text{if $v_i$ is the tail of the oriented edge $j$,}\\
             0, & \text{otherwise.}\\
           \end{array}
         \right.
\end{equation*}
Denote $x(t)=[x_1(t)^T~x_2(t)^T~\dots ~x_n(t)^T]^T$ and $z(t)=(D^T\otimes I_N)x(t)$, which is the vector obtained by stacking the relative states corresponding to edges $1$, $2$, $\dots$, $m$ in sequence. Let $z(t)=[z_1(t)^T~z_2(t)^T~\dots~z_m(t)^T]^T$ with $z_p(t)\in\R^N$, $p=1,2,\dots,m$. For the $p$th edge $(v_j,v_i)$, if it is oriented from $v_j$ to $v_i$ in the definition of $D$, denote $z_p(t)=x_i(t)-x_j(t)$, $\hat{z}_p(t)=z_p(t^{p}(t))-e^{ij}(t)$, and  $e^p_k=e^{ij}_k$, $k=0,1,\dots$.  Denote $\hat{z}(t)=[\hat{z}_1(t)^T~\hat{z}_2(t)^T~\dots~\hat{z}_m(t)^T]^T$. The system \eqref{sys:agenti} with protocol \eqref{eq:procedge} has the following equivalent representation:
\begin{equation*}
 \dot{x}(t)=(I_n\otimes A) x(t)-(D\otimes BK )\hat{z}(t),
\end{equation*}
and \begin{equation}\label{eq:dotz}
  \dot{z}(t)=(I_m\otimes A) z(t)-(D^TD\otimes BK )\hat{z}(t).
\end{equation}

 Matrix $DD^T$ is called the {\it graph Laplacian} of $\G$, which is independent of the selection  of $D$ \cite{Horn_book1985}. Let the eigenvalues of $DD^T$ be $\lambda_1$, $\lambda_2$, $\cdots$, $\lambda_n$ in the increasing order. Then  $\lambda_1=0$.   $\lambda_2$ is called the {\em algebraic connectivity} of $\G$, which is positive when $\G$ is connected \cite{Godsil_book2001}. It can be observed that matrices $DD^T$ and $D^TD$ share the same non-zero eigenvalues with the same algebraic multiplicities. So $D^TD$ also serves the purpose of  algebraic characterization of $\G$  and is called {\it edge Laplacian} \cite{Zelazo_cdc2007edge}.

\subsubsection{Average consensus}
The feedback matrix $K$ is designed with the requirement that there exist a positive definite matrix $P$ and a positive number $\mu$ satisfying the following Lyapunov inequalities:
\begin{equation}\label{eq:AlambdaBK}
  (A+\lambda_iBK)^TP +P(A+\lambda_iBK)+2\mu I_N\leq 0,  i=2,3,\dots,n.
\end{equation}
Let $\lambda_{PBK_s}$ denote the largest eigenvalue of $(1/2)(PBK+K^TB^TP)$, $\sigma$ denote the largest singular value of matrix $(D^TD\otimes PBK)-2\mu I_{nN}$,  $\sigma_{BK}$ denote the largest singular value of matrix $BK$, and \begin{equation*}
  \begin{split}
    \mathcal{S}=\{\gamma:&\gamma>0,\mu-{\sigma}/{(2\gamma)}>0,\\
    & {\gamma\sigma}/{2}-\mu+\lambda_n\lambda_{PBK_s} >0\}.
  \end{split}
\end{equation*}
\begin{theorem}\label{dl:relative}
In system \eqref{sys:agenti}, assume that $(A,B)$ is stabilizable, the interaction topology $\G$ is connected, and Assumption~\ref{ass:a1} holds for any $i$.
If there exist positive numbers $\alpha$ and  $\beta$  such that
 \begin{equation}\label{eq:dledgecndn}
 \begin{split}
\omega(1+&\frac{1}{\alpha})(1+\frac{1}{\beta}) e^{2\lambda_{A_s}(h+\tau)}\\
  &+\Big((1+\alpha)(1+\frac{1}{\beta}){\sigma_{A}}^2\\
  &+(1+\beta)\frac{7}{3}{\lambda_n}^2{\sigma_{BK}}^2\Big)(h+\tau)^2 e^{2\lambda_{A_s}(h+\tau)}\\
<& \sup_{\gamma\in\mathcal{S}}\frac{\mu-\frac{\sigma}{2\gamma} }{\frac{\gamma\sigma}{2}-\mu+\lambda_n\lambda_{PBK_s}},
\end{split}
 \end{equation}
  then protocol \eqref{eq:procedge} solves the average consensus problem; mathematically,
  \begin{equation*}
  \lim_{t\to \infty}\bigg(x_i(t)- \frac{1}{n}e^{At}\sum_{i=1}^nx_i(0)\bigg)=0.
    \end{equation*}
\end{theorem}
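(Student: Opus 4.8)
The plan is to read the edge dynamics \eqref{eq:dotz} as an instance of the abstract system \eqref{eq:sys_z}, with coupling matrix $G=D^TD$ (the edge Laplacian) and with the common feedback gain taken to be $BK$ in place of $K$, and then to invoke Theorem~\ref{dl:main}. Two ingredients must be supplied: a lower-bounded $V(t)$ satisfying the dissipation inequality \eqref{eq:dVdtmain} for some $\mu'>0,\varepsilon'>0$, and a verification that hypothesis \eqref{eq:dledgecndn} forces the abstract condition \eqref{eq:dlmaincdtn}. For the coefficient matching I will use that $D^TD$ is symmetric positive semidefinite, so its largest singular value equals its largest eigenvalue $\lambda_n$; hence $\sigma_G=\lambda_n$ and $\sigma_K=\sigma_{BK}$, which reproduces the term $\tfrac{7}{3}\lambda_n^2\sigma_{BK}^2$ in \eqref{eq:dledgecndn}.

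I would take $V(t)=z(t)^T(I_m\otimes P)z(t)$ with $P$ the solution of \eqref{eq:AlambdaBK}; since $P\succ0$ this is nonnegative, hence lower bounded. Writing $\delta=z-\hat z$ and differentiating along \eqref{eq:dotz} gives
\[
\dot V = z^T\left[I_m\otimes(PA+A^TP)-D^TD\otimes(PBK+K^TB^TP)\right]z + 2z^T(D^TD\otimes PBK)\delta .
\]
The key point is that $z(t)=(D^T\otimes I_N)x(t)$ lies, by definition, in the edge (cut) space $\mathrm{im}(D^T)\otimes\R^N$, and on that subspace the spectrum of $D^TD$ consists exactly of the nonzero Laplacian eigenvalues $\lambda_2,\dots,\lambda_n$. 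Decomposing $z$ in an orthonormal eigenbasis of $D^TD$ restricted to this subspace and applying \eqref{eq:AlambdaBK} mode by mode bounds the first bracket by $-2\mu z^Tz$. I then set $R:=(D^TD\otimes PBK)-2\mu I$, whose norm is $\sigma$, use $2z^T(D^TD\otimes PBK)\delta=2z^TR\delta+4\mu z^T\delta$, and complete the square through $-2\mu z^Tz+4\mu z^T\delta=-2\mu\hat z^T\hat z+2\mu\delta^T\delta$ to reach $\dot V\le -2\mu\hat z^T\hat z+2\mu\delta^T\delta+2z^TR\delta$.

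The residual cross term is split as $2z^TR\delta=2\hat z^TR\delta+2\delta^TR\delta$. A Young's inequality with a free parameter $\gamma>0$ gives $2\hat z^TR\delta\le \tfrac{\sigma}{\gamma}\hat z^T\hat z+\gamma\sigma\,\delta^T\delta$, while $2\delta^TR\delta\le(2\lambda_n\lambda_{PBK_s}-4\mu)\delta^T\delta$ from the eigenvalue bounds on $D^TD$ and $\tfrac12(PBK+K^TB^TP)$. Collecting everything yields \eqref{eq:dVdtmain} with $\mu'=2(\mu-\tfrac{\sigma}{2\gamma})$ and $\varepsilon'=2(\tfrac{\gamma\sigma}{2}-\mu+\lambda_n\lambda_{PBK_s})$, so that positivity of both is precisely $\gamma\in\mathcal{S}$ and the ratio $\mu'/\varepsilon'$ equals the quantity maximized in \eqref{eq:dledgecndn}. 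The step I expect to be the main obstacle is exactly this bookkeeping: one must arrange the manipulation so that the subspace-restricted inequality \eqref{eq:AlambdaBK} is invoked \emph{only} on the pure drift $z^Tz$, with all sampling-error terms handled by norm bounds valid on all of $\R^{mN}$ (note that $\hat z$ need not lie in the cut space), and then identify the resulting $\mu',\varepsilon'$ with the expressions defining $\mathcal{S}$ and the supremum in \eqref{eq:dledgecndn}. Because \eqref{eq:dledgecndn} is a strict inequality against that supremum, some admissible $\gamma$ makes the sampling terms strictly smaller than $\mu'/\varepsilon'$, which is condition \eqref{eq:dlmaincdtn}, so Theorem~\ref{dl:main} gives $\lim_{t\to\infty}z(t)=0$.

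It remains to convert $z\to0$ into average consensus. Since every column of the incidence matrix sums to zero, $\yi_n^TD=0$, whence $(\yi_n^T\otimes I_N)\dot x=(\yi_n^T\otimes A)x$, i.e. $\tfrac{d}{dt}\sum_i x_i=A\sum_i x_i$, so the average obeys $\tfrac1n\sum_i x_i(t)=\tfrac1n e^{At}\sum_i x_i(0)$. Because $\G$ is connected, $z\to0$ forces $x_i(t)-x_j(t)\to0$ for every pair along a path of edges, hence $x_i-\tfrac1n\sum_j x_j=\tfrac1n\sum_j(x_i-x_j)\to0$. Combining the two facts gives $x_i(t)-\tfrac1n e^{At}\sum_j x_j(0)\to0$, which is the asserted average consensus.
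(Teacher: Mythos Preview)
Your proposal is correct and follows essentially the same route as the paper: the same Lyapunov function $V=\tfrac12 z^T(I_m\otimes P)z$ (your missing factor $\tfrac12$ is cosmetic), the same use of \eqref{eq:AlambdaBK} on the drift to obtain $\dot V\le -\mu z^Tz + z^T(D^TD\otimes PBK)(z-\hat z)$, the same re-expression in $\hat z$ and $z-\hat z$ with a Young inequality in $\gamma$ to reach \eqref{eq:dVdtmain}, and then the appeal to Theorem~\ref{dl:main}. The only cosmetic differences are that the paper obtains $-\mu z^Tz$ by passing to $x$ and diagonalizing $DD^T$ (equivalent to your restriction of $D^TD$ to the cut space $\mathrm{im}(D^T)$), and that for the final conversion $z\to 0\Rightarrow \delta\to 0$ the paper uses $V\to 0$ together with $V\ge \tfrac{\lambda_2}{2}\,\delta^T\delta$ rather than your path-connectedness and average-invariance argument.
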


\begin{remark}
\begin{enumerate}
  \item[(1)] In the proof of Theorem 2, we will see that if there exists $\gamma>0$ such that $\mu-{\sigma}/{(2\gamma)}>0$ and ${\gamma\sigma}/{2}-\mu+\lambda_n\lambda_{PBK_s}\leq 0$, then the derivative of the employed Lyapunov candidate is always negative (in the case with nonzero $\hat{z}(t)$; see \eqref{eq:dVdtleq}), and thus the system solves the average consensus problem for any sampling periods, which is usually not possible for the systems with zero-order holds. So Theorem \ref{dl:relative} only considers the case that $\gamma\in\mathcal{S}$.
  \item[(2)]Since $(A,B)$ is stabilizable, we can always find proper matrices $K$, $P$, and positive number $\mu$ such that inequalities \eqref{eq:AlambdaBK} holds. One feasible solution $K=B^TP$ can be  obtained by solving the following Riccati equation in the LQR problem with $0<\lambda\leq \lambda_2$:
\begin{equation}\label{eq:Riccati}
  PA+A^TP-2\lambda PBB^TP=-2\mu I_N.
\end{equation}
Note that the above Riccati equation has been used in the design of consensus protocols in the literature \cite{Qin_tcns2015,Hu_ta2016}. Furthermore, $\lambda$ can be estimated only by the number of agents without the knowledge of the algebraic connectivity of $\G$\cite{Mohar_gca1991}.
\end{enumerate}
\end{remark}

{\it Proof of Theorem \ref{dl:relative}:}
Consider the following Lyapunov candidate:
\begin{equation*}
  V(t)=\frac{1}{2}z(t)^T(I_m\otimes P)z(t).
\end{equation*}
Then by \eqref{eq:dotz},
\begin{align}
\frac{dV(t)}{dt}=&z(t)^T(I_m\otimes P)\dot{z}(t)\nonumber\\
  =&z(t)^T\Big(I_m\otimes \frac{1}{2}(PA+A^TP)-D^TD\otimes PBK\Big)z(t)\nonumber\\
  &+z(t)^T(D^TD\otimes PBK)(z(t)-\hat{z}(t))\label{eq:dVz}.
\end{align}
Let $D D^T=C^{-1}\Lambda C$, where $C^{-1}=C^T$ and $\Lambda$ is the diagonal matrix with eigenvalues of $D^TD$ in the diagonal positions.  In  equation \eqref{eq:dVz}, we have that
\begin{align}
 z(t)^T\Big(&I_m\otimes\frac{1}{2}(PA+A^TP)-D^TD\otimes PBK\Big)z(t)\nonumber\\
 =& x(t)^T(C^{-1}\otimes I_N)(\Lambda\otimes \frac{1}{2}(PA+A^TP)\nonumber\\
 &+\Lambda^2\otimes PBK)(C\otimes I_N)x(t)\nonumber\\
\leq&-x(t)^T(C^{-1}\otimes I_N)(\Lambda \otimes  \mu I_N)(C\otimes I_N)x(t)\nonumber\\
 \leq&-\mu z(t)^Tz(t). \label{eq:zlambda2}
\end{align}
Substituting  inequality \eqref{eq:zlambda2} into equation  \eqref{eq:dVz} gives that
\begin{align*}
  \frac{dV(t)}{dt}\leq& -\mu z(t)^Tz(t) +z(t)^T(D^TD\otimes PBK)(z(t)-\hat{z}(t))\label{eq:dvdtzedge}\\
  = & -\mu \hat{z}(t)^T\hat{z}(t)\nonumber\\
  &+\hat{z}(t)^T((D^TD\otimes PBK)-2\mu I_{nN})(z(t)-\hat{z}(t))\nonumber\\
  &+(z(t)\!-\!\hat{z}(t))^T\!(D^T\!D\!\otimes\! PBK\!-\!\mu I_{nN})(z(t)\!-\!\hat{z}(t)).\nonumber
\end{align*}
Thus, for any $\gamma>0$,
\begin{equation}\label{eq:dVdtleq}
\begin{split}
  \frac{dV(t)}{dt}\leq&-(\mu-\frac{\sigma}{2\gamma}) \hat{z}(t)^T\hat{z}(t)\\ &+\!(\frac{\gamma\sigma}{2}\!-\!\mu\!+\!\lambda_n\lambda_{PBK_s})(z(t)\!-\!\hat{z}(t))^T(z(t)\!-\!\hat{z}(t)).
\end{split}
\end{equation}

For any $\gamma\in\mathcal{S}$, by Theorem \ref{dl:main}  and inequality  \eqref{eq:dledgecndn}, $\lim_{t\to\infty}z(t)=0$, and thus $\lim_{t\to\infty}V(t)=0$. Let
\begin{equation}\label{eq:delta}
  \delta(t)=x(t)-\yi_n\otimes \frac{1}{n}e^{At}\sum_{i=1}^nx_i(0).
\end{equation}
 Noticing that $z(t)=(D^T\otimes I_N)\delta(t)$ and $\delta(t)\bot \yi_{nN}$, we obtain that
\begin{equation*}
  V(t)=\frac{1}{2}\delta(t)^TDD^T\delta(t)\geq \frac{\lambda_2}{2}\delta(t)^T\delta(t).
\end{equation*}
Therefore, $\lim_{t\to\infty}\delta(t)=0$.
\endproof

\subsection{Asynchronous broadcasting of self-sampled data}
Consider the multi-agent system \eqref{sys:agenti} with an undirected interaction topology $\G$. Different from the  sampling of relative states in the previous subsection, we assume that each agent samples its own state with measurement errors and then broadcasts them to its neighbors with time delays. When all neighbors get the information, the agent and the neighbors all update their controllers. Denote the sampling instants of agent $i$, the associated measurement errors and time delays by $t^i_k$, $e^i_k$, and $\tau^i_k$, $k=0,1,2,\dots$, respectively. Denote $\hat{x}_i(t)=x_i(t^i_k)-e^i_k$ for $t\in[t^i_k+\tau^i_k, t^i_{k+1}+\tau^i_{k+1})$, $k=0,1,2,\dots$. Under Assumption~\ref{ass:a1}~(1-3), we give the following protocol:
\begin{equation}\label{eq:procagent}
  u_i(t)=K\sum_{j\in\N_i}(\hat{x}_j(t)-\hat{x}_i(t)).
\end{equation}
The matrix $K$ will be  designed later.

Denote $x(t)=[x_1(t)^T~x_2(t)^T~\dots~x_n(t)^T]^T$  and $\hat{x}(t)=[\hat{x}_1(t)^T~ \hat{x}_2(t)^T~\dots~\hat{x}_n(t)^T]^T$.
Substituting equation \eqref{eq:procagent} into equation \eqref{sys:agenti} gives that
\begin{equation}\label{eq:dxbrdcst}
  \dot{x}(t)=(I_n\otimes A) x(t)-(DD^T\otimes BK)\hat{x}(t).
\end{equation}

Let $\delta(t)=[\delta_1(t)^T~\delta_2(t)^T~\dots~\delta_n(t)^T]^T$ be  defined by \eqref{eq:delta} with $\delta_i(t)\in\R^N$, and let $\kappa(t)=(1/n)e^{At}\sum_{i=1}^n x_i(0)$. For $t\in[t^i_k+\tau^i_k,t^i_{k+1}+\tau^i_{k+1})$, let $\hat{\delta}_i(t)=\delta_i(t^i_k)-e^i_k$ and $\hat{\kappa}_i(t)=\kappa(t^i_k)$, $k=0,1,\dots$, $i=1,2,\dots,n$. Denote $\hat{\delta}(t)=[\hat{\delta}_1(t)^T~\hat{\delta}_2(t)^T~\dots~\hat{\delta}_n(t)^T]^T$ and $\hat{\kappa}(t)=[\hat{\kappa}_1(t)^T~\hat{\kappa}_2(t)^T~\dots~\hat{\kappa}_n(t)^T]^T$.
Then we have
\begin{equation}\label{eq:dotdeltabrdcst}
  \dot{\delta}(t)=(I_n\otimes A)\delta(t)-(DD^T\otimes BK)(\hat{\delta}(t)+\hat{\kappa}(t)),
\end{equation}
and $\delta(t)\perp \yi_{nN}$.

\begin{remark}\begin{enumerate}
                \item[(1)] By the definition of incidence matrix $D$, if the system solves a consensus problem, then the final consensus trajectory should be $\kappa(t)$. However, it does not satisfy the differential equation \eqref{eq:dxbrdcst} with asynchronous sampling and unstable system matrix $A$; that is, $(DD^T\otimes BK)\hat{\kappa}(t)\not=0$ in most cases. Thus in such cases, the asynchronous consensus cannot be achieved.
                \item[(2)]  In the case of multiplicative type of measurement errors, even if the consensus is reached, the final consensus state is usually not zero and thus $e^i_k$ may never converge to zero as $k\to\infty$. Therefore, the multiplicative type of measurement errors can destroy the consensus of the considered system \eqref{eq:dxbrdcst}.
              \end{enumerate}

\end{remark}

\subsubsection{Asynchronous consensus of single-integrators}
In the case of multiple single-integrators with $N=1$, $A=0$, and $B=1$, $\kappa(t)$ is time-invariant and so is $\hat{\kappa}(t)$. Therefore, choose $K=1$ and by \eqref{eq:dotdeltabrdcst},
\begin{equation*}
  \dot{\delta}(t)=-DD^T\hat{\delta}(t).
\end{equation*}
Consider the following Lyapunov candidate
\begin{equation}\label{eq:Vdelta2}
V(t)=\frac{1}{2}\delta(t)^T\delta(t).
\end{equation}
With the same arguments as in proving \eqref{eq:dVdtleq}, we have
\begin{equation*}
\begin{split}
  \frac{dV(t)}{dt}\leq &- \lambda_2\delta(t)^T\delta(t)+ \delta(t)^TDD^T(\delta(t)-\hat{\delta}(t))\\
  \leq &- (\lambda_2-\frac{\sigma}{2\gamma})\hat{\delta}(t)^T\hat{\delta}(t)\\
  &+(\frac{\gamma\sigma}{2}+\lambda_n-\lambda_2)(\hat{\delta}(t)-\delta(t))^T(\hat{\delta}(t)-\delta(t)),
\end{split}
\end{equation*}
where $\sigma=\max\{2\lambda_2,\lambda_n-2\lambda_2\}$ is the largest singular value of matrix $DD^T-2\lambda_2I_n$, and $\gamma>0$ with $\lambda_2-{\sigma}/(2\gamma)>0$.

By Theorem \ref{dl:main}, we have the following result:
\begin{theorem}\label{dl:singleintegrator:brdcst} In system \eqref{sys:agenti}, suppose that each agent takes the single-integrator dynamics without measurement errors, and suppose that the sampling instants and  time delays satisfy Assumption~\ref{ass:a1}~(1-3), $i=1,2,\dots,n$.
If the interaction topology $\G$ is connected and
\begin{equation}\label{eq:sngintgrtrcndtn}
\begin{split}
(h+\tau)^2<\frac{3}{7{\lambda_n}^2}\sup_{\gamma>0}\frac{\lambda_2-\frac{\sigma}{2\gamma}}{\frac{\gamma\sigma}{2}+\lambda_n-\lambda_2},
\end{split}
\end{equation}
then protocol \eqref{eq:procagent} with $K=1$ solves the average consensus problem.
\end{theorem}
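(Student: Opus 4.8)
The plan is to apply Theorem~\ref{dl:main} directly to the reduced error dynamics $\dot{\delta}(t)=-DD^T\hat{\delta}(t)$, with the vector $\delta(t)$ here playing the role of $z(t)$ in Theorem~\ref{dl:main}. First I would confirm that this system is an instance of the overall model \eqref{eq:sys_z}: setting $A=0$, $B=1$, $K=1$ and identifying the coupling matrix as $G=DD^T$, the single-integrator error equation has exactly the structure required by Theorem~\ref{dl:main}. The point enabling the reduction to $\dot{\delta}(t)=-DD^T\hat{\delta}(t)$ is that $A=0$ forces $\kappa(t)$ to be constant, whence $\hat{\kappa}(t)=\yi_n\kappa$ and $DD^T\hat{\kappa}(t)=0$ because $\yi_n$ lies in the kernel of the Laplacian; this is precisely where the single-integrator assumption is consumed. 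Since the agents sample without measurement errors, Assumption~\ref{ass:a1}~(4) holds trivially with $\omega=0$.

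Next I would read off the constants in condition \eqref{eq:dlmaincdtn} for this specialization. Because $A=0$, we have $\lambda_{A_s}=0$ and $\sigma_A=0$, so $e^{2\lambda_{A_s}(h+\tau)}=1$ and the term carrying $\sigma_A^2$ vanishes; the choice $\omega=0$ kills the measurement-error term. With $G=DD^T$ and $K=1$ one has $\sigma_G=\lambda_n$ and $\sigma_K=1$. The Lyapunov inequality derived just above the theorem statement is already of the form \eqref{eq:dVdtmain} under the identification $\mu=\lambda_2-\sigma/(2\gamma)$ and $\varepsilon=\gamma\sigma/2+\lambda_n-\lambda_2$, both of which are positive for $\gamma>0$ satisfying $\lambda_2-\sigma/(2\gamma)>0$ (here $\lambda_n\geq\lambda_2$ guarantees $\varepsilon>0$). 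With these substitutions, condition \eqref{eq:dlmaincdtn} collapses to
\begin{equation*}
\mu-\varepsilon(1+\beta)\frac{7}{3}\lambda_n^2(h+\tau)^2>0,
\end{equation*}
in which $\alpha$ no longer appears, since all of its coefficients vanished. The least restrictive form is obtained by letting $\beta\to 0^+$, yielding $(h+\tau)^2<\frac{3}{7\lambda_n^2}\,\mu/\varepsilon$; taking the supremum over admissible $\gamma$ reproduces hypothesis \eqref{eq:sngintgrtrcndtn}, after which Theorem~\ref{dl:main} gives $\lim_{t\to\infty}\delta(t)=0$, i.e.\ average consensus.

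The one place requiring care — the main obstacle — is the limiting argument in $\beta$ combined with the supremum over $\gamma$: condition \eqref{eq:dlmaincdtn} demands a single pair of strictly positive numbers $(\alpha,\beta)$ for a fixed Lyapunov inequality, whereas the clean threshold \eqref{eq:sngintgrtrcndtn} emerges only in the limit $\beta\to 0$ and after optimizing the gain $\gamma$. I would resolve this by a continuity argument. Each fixed $\gamma$ with $\lambda_2-\sigma/(2\gamma)>0$ furnishes a valid instance of \eqref{eq:dVdtmain} with its own $\mu(\gamma),\varepsilon(\gamma)$, and the collapsed condition is solvable in $\beta>0$ (with $\alpha$ arbitrary) exactly when $(h+\tau)^2<\frac{3}{7\lambda_n^2}\,\mu(\gamma)/\varepsilon(\gamma)$. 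Since \eqref{eq:sngintgrtrcndtn} is a strict inequality, the definition of the supremum lets me select a $\gamma$ for which $\mu(\gamma)/\varepsilon(\gamma)$ exceeds $\frac{7}{3}\lambda_n^2(h+\tau)^2$, and then a genuinely positive $\beta$ verifying \eqref{eq:dlmaincdtn} exists. Invoking Theorem~\ref{dl:main} for that admissible $\gamma$ completes the proof; every remaining step is a direct specialization, so this existence-of-parameters verification is the only non-mechanical ingredient.
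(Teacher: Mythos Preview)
Your proposal is correct and follows essentially the same approach as the paper: reduce to $\dot{\delta}(t)=-DD^T\hat{\delta}(t)$, use the quadratic Lyapunov candidate $V(t)=\tfrac{1}{2}\delta(t)^T\delta(t)$ to obtain an inequality of type \eqref{eq:dVdtmain}, and then invoke Theorem~\ref{dl:main} with the specializations $A=0$, $\omega=0$, $\sigma_G=\lambda_n$, $\sigma_K=1$. The paper simply writes ``By Theorem~\ref{dl:main}, we have the following result'' without spelling out the parameter-selection step; your explicit continuity argument for choosing an admissible $\gamma$ and a strictly positive $\beta$ from the strict inequality \eqref{eq:sngintgrtrcndtn} fills in exactly what the paper leaves implicit.
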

\begin{remark}\label{rmk:periodic}
In \cite{Xie_acc2009}, it has been shown that the necessary and sufficient condition for the synchronous sampled-data consensus of single-integrators with sampling period $h$ is $0<h\leq {2}/{\lambda_n}$. Compared with this condition, the condition \eqref{eq:sngintgrtrcndtn} may be a bit conservative, which is understandable since this condition is valid in the general asynchronous sampling case and obtained based on the general model of linear subsystems.
\end{remark}
\subsubsection{Asynchronous consensus of marginally stable systems}

Suppose that matrix $A$ is marginally stable.  Then $ \max\nolimits_s\|e^{As}\|_2$ is bounded and by Lemma \ref{yl:singularvalue}, we have\footnote{In case of a $0$-valued $\lambda_{A_s}$ serving as a denominator,  the value of the expression is set to the limit when $\lambda_{A_s}\to 0$. }
\begin{equation*}
\begin{split}
  \|\hat{\kappa}(t)-\yi_n\otimes \kappa(t)\|_2 < &(1/\sqrt{n})\|\sum_{i=1}^n x_i(0)\|_2 \max\nolimits_s\|e^{As}\|_2\\
  &\times \frac{\sigma_A(e^{\lambda_{A_s} h}-1)}{\lambda_{A_s}}.
\end{split}
\end{equation*}
Denote the right side of the above equation by $\Delta_{\kappa}(h)$. Clearly, $\lim_{h\to 0} \Delta_{\kappa}(h)=0$.

For any $i$, denote $e_i(t)=e^i_k$ and $\tilde{\delta}_i(t)=\delta_i(t^i_k)$, $t\in[t^i_k+\tau^i_k,t^i_{k+1}+\tau^i_{k+1})$, $k=0,1,\dots$. Denote $e(t)=[e_1(t)^T~e_2(t)^T$ $\dots$ $e_n(t)^T]^T$ and $\tilde{\delta}(t)=[\tilde{\delta}_1(t)^T~\tilde{\delta}_2(t)^T~\dots~\tilde{\delta}_n(t)^T]^T$. Then $\hat{\delta}(t)=\tilde{\delta}(t)-e(t)$ and
\begin{equation}\label{eq:dotdeltamrgnl}
\begin{split}
  \dot{\delta}(t)=&(I_n\otimes A)\delta(t)-(DD^T\otimes BK)\tilde{\delta}(t)\\
  &+ (DD^T\otimes BK)(e(t)-\hat{\kappa}(t)).
\end{split}
\end{equation}
If $\|e(t)\|_2\leq \Delta_e$, then
\begin{equation*}
\begin{split}
  \|(DD^T&\otimes BK)(e(t)-\hat{\kappa}(t))\|_2\\
  &\leq  \|DD^T\otimes BK\|_2(\Delta_{\kappa}(h)+\Delta_e)
\triangleq \Delta(h).
\end{split}
\end{equation*}

Choose the feedback matrix $K=B^TP$  with a positive definite matrix $P$ and a positive number $\mu$ by \eqref{eq:Riccati} and consider the following Lyapunov function candidate
\begin{equation*}
V(t)=\frac{1}{2}\delta(t)^T(I_n\otimes P)\delta(t).
\end{equation*}
Then we have
\begin{align}
  \frac{dV(t)}{dt}\leq &\delta(t)^T(I_n\otimes PA)\delta(t)\!-\! \delta(t)^T(DD^T\otimes PBB^TP)\tilde{\delta}(t)\nonumber\\
    &+ \frac{\lambda_P}{2\eta} \delta(t)^T \delta(t)+\frac{\lambda_P\eta}{2}\Delta(h)^2\nonumber\\
    \leq&-(\mu-\frac{\lambda_P}{2\eta}-\frac{\sigma}{2\gamma}) \tilde{\delta}(t)^T\tilde{\delta}(t) \nonumber\\ &+\!(\frac{\gamma\sigma}{2}\!-\!\mu\!+\!\frac{\lambda_P}{2\eta}\!+\!\lambda_n{\sigma_{PB}}^2)(\delta(t)\!-\!\tilde{\delta}(t))^T\!(\delta(t)\!-\!\tilde{\delta}(t))\nonumber\\
    &+\frac{\lambda_P\eta}{2}\Delta(h)^2, \label{eq:dVdtMrgnl}
\end{align}
where $\lambda_P$ is the largest eigenvalue of $P$, $\sigma_{PB}$ is the largest singular value of $PB$, $\sigma$ is the largest singular value of $(DD^T\otimes PBB^TP)-2(\mu-\lambda_P/(2\eta)) I_{nN}$, and $\gamma$ and $\eta$ are any positive numbers with $\mu- {\lambda_P}/({2\eta})-{\sigma}/({2\gamma})>0$.

It can be seen that equations \eqref{eq:dotdeltabrdcst} and \eqref{eq:dVdtMrgnl} do not exactly match the model \eqref{eq:sys_z}, \eqref{eq:dVdtmain}. However, when the largest  sampling period $h$ and measurement errors are small, the system represented by \eqref{eq:dotdeltamrgnl} and \eqref{eq:dVdtMrgnl} can be seen as the model \eqref{eq:sys_z}, \eqref{eq:dVdtmain} with uncertainties.
Let $\sigma_{BB^TP}$ denote the largest singular value of $BB^TP$,
\begin{equation*}
\begin{split}
  \bar{\Delta}(h,\tau,\alpha,\gamma,\eta)=&(\frac{\gamma\sigma}{2}-\mu+\frac{\lambda_P}{2\eta}+\lambda_n{\sigma_{PB}}^2)(1+\frac{1}{\alpha})\\
  &\times\max\nolimits_s{\|e^{As}\|_{\infty}}^2n(h+\tau)^2{\Delta(h)}^2\\
  &+\frac{1}{2}\lambda_P\eta{\Delta(h)}^2,
\end{split}
\end{equation*}
 and
\begin{equation*}
 \begin{split}
  \Gamma(h,\tau,\alpha,\beta,\gamma,\eta)=&\mu-\frac{\lambda_P}{2\eta}-\frac{\sigma}{2\gamma}\\
  &-(\frac{\gamma\sigma}{2}-\mu+\frac{\lambda_P}{2\eta}+\lambda_n{\sigma_{PB}}^2)(1+\alpha)\\
  &\times\Big((1+\frac{1}{\beta}){\sigma_{A}}^2+(1+\beta)\frac{7}{3}{\lambda_n}^2{\sigma_{BB^TP}}^2\Big)\\
  &\times(h+\tau)^2e^{2\lambda_{A_s}(h+\tau)},
\end{split}
 \end{equation*}
 where $\|\cdot\|_{\infty}$ denotes the maximum row sum matrix norm.
  By the same arguments as in the proof of Theorem \ref{dl:main}, we can obtain the following result:
\begin{theorem}\label{dl:brdcst:error}
In system \eqref{sys:agenti}, assume that $(A,B)$ is stabilizable, $A$ is marginally stable, Assumption~\ref{ass:a1}~(1-3) holds,   $\|e(t)\|_2 \leq\Delta_e$,  $t^i_{k+1}-t^i_{k}$ is lower bounded by a positive number independent of $k$, $i=1,2,\dots,n$, and the interaction topology is connected.  Under the proposed protocol \eqref{eq:procagent} with $K$ given by \eqref{eq:Riccati}, if $\mathcal{S}\not=\emptyset$, then  for any time $t'$ and any $\theta>1$, there exists some $t$, $t\geq t'$, such that
\begin{equation}\label{eq:consensuserror}
  \tilde{\delta}(t)^T\tilde{\delta}(t)\leq \inf_{[\alpha,\beta,\gamma,\eta]\in\mathcal{S}}\frac{\theta\bar{\Delta}(h,\tau,\alpha,\gamma,\eta)}{\Gamma(h,\tau,\alpha,\beta,\gamma,\eta)},
\end{equation}
where
\begin{equation*}
  \begin{split}
   \mathcal{S}=\Big\{[\alpha,\beta,\gamma,\eta]:&\alpha,\beta,\gamma,\eta>0,\\
   &\mu- {\lambda_P}/({2\eta})-{\sigma}/({2\gamma})>0, \\
   &\frac{\gamma\sigma}{2}-\mu+\frac{\lambda_P}{2\eta}+\lambda_n{\sigma_{PB}}^2\geq 0,\\
      &\Gamma(h,\tau,\alpha,\beta,\gamma,\eta)>0\Big\}.
  \end{split}
\end{equation*}
\end{theorem}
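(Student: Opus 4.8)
The plan is to reuse the analytic-synchronization argument behind Theorem~\ref{dl:main}, treating the marginally-stable closed loop \eqref{eq:dotdeltamrgnl} as the clean sampled-data model perturbed by the exogenous signal $(DD^T\otimes BK)(e(t)-\hat\kappa(t))$, whose Euclidean norm is at most $\Delta(h)$. I would take the estimate \eqref{eq:dVdtMrgnl} as the starting point: it already isolates the decreasing term $-(\mu-\lambda_P/(2\eta)-\sigma/(2\gamma))\tilde\delta(t)^T\tilde\delta(t)$, the mismatch term carrying the coefficient $\gamma\sigma/2-\mu+\lambda_P/(2\eta)+\lambda_n\sigma_{PB}^2$, and the residual $\tfrac12\lambda_P\eta\Delta(h)^2$. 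The membership conditions defining $\mathcal{S}$ are precisely what makes the first coefficient strictly positive and the mismatch coefficient nonnegative, so that the splitting inequalities below are oriented correctly.

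The core of the proof---and the step I expect to be the main obstacle---is to bound the sampling mismatch $(\delta(t)-\tilde\delta(t))^T(\delta(t)-\tilde\delta(t))$ by a multiple of $\tilde\delta(t)^T\tilde\delta(t)$ plus a multiple of $\Delta(h)^2$, exactly as in Theorem~\ref{dl:main} but now carrying the extra disturbance. For each block $i$ and $t\in[t^i_k+\tau^i_k,t^i_{k+1}+\tau^i_{k+1})$ I would write $\delta_i(t)-\tilde\delta_i(t)=\delta_i(t)-\delta_i(t^i_k)=\int_{t^i_k}^t\dot\delta_i(s)\,ds$, substitute the closed-loop dynamics \eqref{eq:dotdeltamrgnl}, and observe that the integration length is at most $h+\tau$ by Assumption~\ref{ass:a1}(1)--(3). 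Bounding $\|\delta(s)\|$ across the interval through the matrix-exponential (variation-of-constants) estimate is where the marginal stability of $A$ is essential: it makes $\max_s\|e^{As}\|$ finite and produces the factor $e^{2\lambda_{A_s}(h+\tau)}$ on the homogeneous part and the factor $\max_s\|e^{As}\|_\infty^2$ on the disturbance part. A first use of Young's inequality (parameter $\alpha$) separates the homogeneous contribution from the disturbance contribution; a second (parameter $\beta$) separates the drift term $\sigma_A^2$ from the coupling term $\tfrac73\lambda_n^2\sigma_{BB^TP}^2$, the constant $\tfrac73$ coming from the same interval integral estimated in Theorem~\ref{dl:main}. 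This yields a bound of the form $(\mathrm{const})(h+\tau)^2e^{2\lambda_{A_s}(h+\tau)}\,\tilde\delta^T\tilde\delta+(\mathrm{const})\,n(h+\tau)^2\max_s\|e^{As}\|_\infty^2\,\Delta(h)^2$.

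Substituting this mismatch bound into \eqref{eq:dVdtMrgnl} and collecting terms reproduces exactly
\[
  \frac{dV(t)}{dt}\leq -\Gamma(h,\tau,\alpha,\beta,\gamma,\eta)\,\tilde\delta(t)^T\tilde\delta(t)+\bar\Delta(h,\tau,\alpha,\gamma,\eta)
\]
for every $[\alpha,\beta,\gamma,\eta]\in\mathcal{S}$, the condition $\Gamma>0$ in $\mathcal{S}$ guaranteeing a strictly positive leading coefficient. I would then finish by contradiction. Since $V(t)=\tfrac12\delta(t)^T(I_n\otimes P)\delta(t)\geq 0$, if $\tilde\delta(t)^T\tilde\delta(t)>\theta\bar\Delta/\Gamma$ were to hold for every $t\geq t'$, then $dV/dt<-(\theta-1)\bar\Delta<0$ uniformly---this is where $\theta>1$ is used---so $V$ would decrease without bound, contradicting $V\geq 0$. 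Hence for each admissible parameter vector there is some $t\geq t'$ with $\tilde\delta(t)^T\tilde\delta(t)\leq\theta\bar\Delta/\Gamma$, and optimizing the right-hand side over $[\alpha,\beta,\gamma,\eta]\in\mathcal{S}$ gives \eqref{eq:consensuserror}. The hypothesis that $t^i_{k+1}-t^i_k$ is lower bounded enters only to rule out accumulation of sampling instants, so that $V$ is absolutely continuous on $[t',\infty)$ and the integration underlying the contradiction step is legitimate; the persistence of $\bar\Delta>0$ (driven by $\Delta_e$ and the non-vanishing $\hat\kappa$ term) is exactly why one obtains ultimate boundedness rather than exact consensus.
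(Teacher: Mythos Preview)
Your overall plan is the right one and matches the paper's declared approach (``the same arguments as in the proof of Theorem~\ref{dl:main}''): start from \eqref{eq:dVdtMrgnl}, control the mismatch $\delta-\tilde\delta$ via the variation-of-constants formula with the disturbance $(DD^T\otimes BK)(e-\hat\kappa)$, and conclude ultimate boundedness by contradiction. The role you assign to each hypothesis (marginal stability, the lower bound on inter-sampling times, $\theta>1$) is also correct.

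There is, however, one genuine gap. You claim a \emph{pointwise} differential inequality
\[
  \frac{dV(t)}{dt}\leq -\Gamma\,\tilde\delta(t)^T\tilde\delta(t)+\bar\Delta,
\]
but the analytic-synchronization argument does not yield this. On $[t^i_k+\tau^i_k,t^i_{k+1}+\tau^i_{k+1})$ the third term in the expansion of $\delta_i(t)-\tilde\delta_i(t)$ is $\int_{t^i_k}^t e^{A(t-s)}(DD^T\otimes BB^TP)_i\,\tilde\delta(s)\,ds$, and here $\tilde\delta_j(s)$ for $j\neq i$ jumps at agent $j$'s own sampling instants, which lie inside $[t^i_k,t]$. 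Those past values $\tilde\delta(t_l)$ cannot be bounded by $\tilde\delta(t)^T\tilde\delta(t)$; indeed the constant $\tfrac73$ you invoke arises only \emph{after} integrating in $t$ and reorganising the resulting double sum over agents and sub-intervals, exactly as in inequalities \eqref{eq:leq2}--\eqref{eq:int_z_minus_hat_z} of the proof of Theorem~\ref{dl:main}. What one actually obtains is an \emph{integral} inequality
\[
  V(t)-V(t')\;\leq\;-\Gamma\sum_{l}(t_{l+1}-t_l)\,\tilde\delta(t_l)^T\tilde\delta(t_l)\;+\;\bar\Delta\,(t-t')\;+\;C_0
\]
with a bounded transient $C_0$, the sum running over the fine partition $\{t_l\}\subset[t',t]$. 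Since $\tilde\delta$ is piecewise constant, that sum equals $\int_{t'}^t\tilde\delta(s)^T\tilde\delta(s)\,ds$, and your contradiction argument goes through verbatim: if $\tilde\delta(s)^T\tilde\delta(s)>\theta\bar\Delta/\Gamma$ for all $s\geq t'$, then $V(t)\leq V(t')+C_0-(\theta-1)\bar\Delta\,(t-t')\to-\infty$, contradicting $V\geq 0$. So the fix is simply to replace your pointwise claim by this integrated version; once you do, the rest of the proposal is sound.
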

\begin{proof}
The proof is omitted here.
\end{proof}

\section{Extensions}
This section only considers the model \eqref{eq:sys_z}, \eqref{eq:dVdtmain}. So the obtained results are also valid for the consensus models in Section III.
\subsection{Quantized sampling}
This subsection gives a simple example to show the effectiveness of Theorem \ref{dl:main} in the scenario of quantization. Further discussions will continue in the next subsection.

In system \eqref{eq:sys_z}, if apply the following Logarithmic quantizer to each entry of $z_i(t^i_k)$\cite{Aldajani_dsp2008}:
\begin{equation}\label{eq:quantizerlog}
  Q_{\log\varepsilon}(\xi)=\left\{
                     \begin{array}{ll}
                       0, & \text{if $\xi=0$,} \\
                      \text{\upshape sign}(\xi)\epsilon^{\lfloor\log_{\epsilon}|\xi|\rfloor}, & \text{otherwise,} \\
                     \end{array}
                   \right.
\end{equation}
then
\begin{equation}\label{eq:z_Qlog}
\begin{split}
 (z_i(t^i_k)-&Q_{\log\epsilon}(z_i(t^i_k)))^T(z_i(t^i_k)-Q_{\log\epsilon}(z_i(t^i_k)))\\
  &\leq (\epsilon-1)^2 Q_{\log\varepsilon}(z_i(t^i_k))^TQ_{\log\epsilon}(z_i(t^i_k)),
\end{split}
\end{equation}
where $\epsilon>1$ is the quantizing level, $\xi$ is the scalar quantizer input,  $\lfloor\cdot\rfloor$ denotes the largest integer not greater than the considered variable, and $Q_{\log\epsilon}(z_i(t^i_k)))$ is the vector obtained by applying the quantizer \eqref{eq:quantizerlog} to  $z_i(t^i_k)$ entrywise. So $Q_{\log\epsilon}(z_i(t^i_k))$ can be viewed as the sampled value $z_i(t^i_k)$ with measurement error $e^i_k=z_i(t^i_k)-Q_{\log\epsilon}(z_i(t^i_k))$.

Redefine $\hat{z}_i(t)=Q_{\log\epsilon}(z_i(t^i_k)))$, $t\in[t^i_k+\tau^i_k,t^i_{k+1}+\tau^i_{k+1})$, $k=0,1,\dots$, $i=1,2,\dots,m$, and define the same $\hat{z}(t)$ as in Section II. By Theorem \ref{dl:main}, we have the following corollary:
\begin{corollary}
In system \eqref{eq:sys_z} with the utilization  of Logarithmic quantizer \eqref{eq:quantizerlog} in sampling, if Assumption \ref{ass:a1} (1-3), $i=1,2,\dots,m$, and inequality \eqref{eq:dVdtmain} hold, and there exist  some positive numbers $\alpha$ and $\beta$, such that
\begin{equation*}
\begin{split}
  \mu &- \varepsilon(\epsilon-1)^2(1+\frac{1}{\alpha})(1+\frac{1}{\beta}) e^{2\lambda_{A_s}(h+\tau)}\\
  &-\varepsilon\Big((1+\alpha)(1+\frac{1}{\beta}){\sigma_{A}}^2\\
  &+(1+\beta)\frac{7}{3}{\sigma_G}^2{\sigma_K}^2\Big)(h+\tau)^2e^{2\lambda_{A_s}(h+\tau)}>0,
\end{split}
\end{equation*}
then \[
 \lim_{t\to\infty}z(t)=0.\]
\end{corollary}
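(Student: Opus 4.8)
The plan is to recognize that quantized sampling fits the framework of Section~II verbatim once the quantization error is reinterpreted as a multiplicative measurement error, so that the corollary reduces to a direct specialization of Theorem~\ref{dl:main}. First I would set $e^i_k=z_i(t^i_k)-Q_{\log\epsilon}(z_i(t^i_k))$, which is exactly the identification already noted in the text. With this choice the redefined sampled state obeys $\hat{z}_i(t)=z_i(t^i_k)-e^i_k=Q_{\log\epsilon}(z_i(t^i_k))$ on each inter-sampling interval $[t^i_k+\tau^i_k,t^i_{k+1}+\tau^i_{k+1})$, matching the definition of $\hat{z}_i(t)$ in Section~II; in particular $\hat{z}_i(t^i_k+\tau^i_k)=Q_{\log\epsilon}(z_i(t^i_k))$. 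Parts (1)--(3) of Assumption~\ref{ass:a1} hold by the corollary's hypothesis, so the whole reduction hinges on verifying part~(4) for this $e^i_k$.

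The key step is therefore to confirm Assumption~\ref{ass:a1}~(4) with $\omega=(\epsilon-1)^2$, and this is precisely the sector bound \eqref{eq:z_Qlog}: its left side is ${e^i_k}^Te^i_k$ and its right side is $(\epsilon-1)^2\hat{z}_i(t^i_k+\tau^i_k)^T\hat{z}_i(t^i_k+\tau^i_k)$, so \eqref{eq:z_Qlog} reads exactly as Assumption~\ref{ass:a1}~(4) with $\omega=(\epsilon-1)^2$. To justify \eqref{eq:z_Qlog} itself I would argue entrywise on the scalar quantizer \eqref{eq:quantizerlog}: for $\xi>0$ with $\ell=\lfloor\log_\epsilon|\xi|\rfloor$ one has $\epsilon^\ell\le\xi<\epsilon^{\ell+1}$, whence $0\le\xi-Q_{\log\epsilon}(\xi)=\xi-\epsilon^\ell<\epsilon^\ell(\epsilon-1)=(\epsilon-1)|Q_{\log\epsilon}(\xi)|$; squaring, using the sign symmetry of the quantizer for $\xi<0$, handling $\xi=0$ trivially, and summing over the $N$ entries of $z_i(t^i_k)$ yields \eqref{eq:z_Qlog}.

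Once part~(4) is established, Assumption~\ref{ass:a1} holds in full with $\omega=(\epsilon-1)^2$, and the stability condition displayed in the corollary is identical to condition \eqref{eq:dlmaincdtn} of Theorem~\ref{dl:main} under the substitution $\omega\mapsto(\epsilon-1)^2$. Since inequality \eqref{eq:dVdtmain} is also assumed, every hypothesis of Theorem~\ref{dl:main} is met by the quantized system \eqref{eq:sys_z}, and I would invoke that theorem directly to conclude $\lim_{t\to\infty}z(t)=0$. The only genuinely substantive ingredient is the sector bound \eqref{eq:z_Qlog} for the Logarithmic quantizer; because it is already supplied in the text, I expect the corollary to follow with no further obstacle, as an immediate specialization of Theorem~\ref{dl:main}.
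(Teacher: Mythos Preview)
Your proposal is correct and follows essentially the same approach as the paper: the paper simply observes that the quantization error $e^i_k=z_i(t^i_k)-Q_{\log\epsilon}(z_i(t^i_k))$ satisfies Assumption~\ref{ass:a1}~(4) with $\omega=(\epsilon-1)^2$ by virtue of \eqref{eq:z_Qlog}, and then invokes Theorem~\ref{dl:main} directly. Your entrywise justification of \eqref{eq:z_Qlog} is more detailed than what the paper provides, but the overall argument is identical.
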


Note that the same property as \eqref{eq:z_Qlog} of the above Logarithmic quantizer can be preserved by  the output of an event-based sampler with event-triggering condition that $e_i(t)^Te_i(t) > (\epsilon-1)^2 \hat{z}_i(t)^T\hat{z}_i(t)$, where $\hat{z}_i(t)$ is the latest measurement of $z_i(\cdot)$ up to $t$ and $e_i(t)$ is the measurement error. So similar discussions in the following subsection can be given to quantized systems.
\subsection{Event-triggered sampling}
This subsection shows an interesting application of asynchronous aperiodic sampling in removing the Zeno behavior in event-based control.

Consider the multi-agent system with $m$ subsystem in the absence of time delays and introduce the same notations $z_i(t)$ and $z(t)$ as in Section II to represent the state of agents. Let $\hat{z}_i(t)$ be the measurement of $z_i(t)$ and it is updated in an event-triggered way. In detail, for agent $i$, let $\hat{z}_i(t)$ first updated at $t^i_0$ and set $\hat{z}_i(t^i_0)=z_i(t^i_0)$. For $k=0,1,\dots$, as time increases from $t^i_k+h$, agent $i$ examine the condition that
\begin{equation}\label{eq:eventcdtn}
  (z_i(t)-z_i(t^i_k))^T(z_i(t)-z_i(t^i_k))\geq \omega z_i(t^i_k)^Tz_i(t^i_k),
\end{equation}
where $h$ is a predetermined positive dwell time and called {\it rest time} in \cite{Xiao_scl2016}.
For the first time $t$ when the above inequality become correct, set $t^i_{k+1}=t$ and $\hat{z}_i(t^i_{k+1})={z}_i(t^i_{k+1})$. Note that the sequence $t^i_0$, $t^i_1$, $\dots$ may terminate at some finite $k$. Obviously, $\hat{z}_i(t)$ is piece-wise constant; that is, in the case with $t^i_k$ given  through the above procedure,   $\hat{z}_i(t)=z_i(t^i_k)$ for $t\in[t^i_k,t^i_{k+1})$ if $t^i_{k+1}$ exists; and $\hat{z}_i(t)=z_i(t^i_k)$ for $t\in[t^i_k,\infty)$ if $t^i_{k+1}$ doesn't exist.

Denote $\hat{z}(t)=[\hat{z}_i(t)^T~\hat{z}_2(t)^T~\dots ~ \hat{z}_m(t)^T]^T$. The overall system, represented by \eqref{eq:sys_z}, has the following stability result:
\begin{corollary}\label{dl:eventbased}
In system \eqref{eq:sys_z}, assume that  $\hat{z}(t)$ is updated according to the event-triggering condition \eqref{eq:eventcdtn} with dwell time $h$ and without time delays, and  there exists a lower bounded function $V(t)$ with property described by \eqref{eq:dVdtmain}.
If there exist some positive numbers $\alpha$ and $\beta$, such that
\begin{equation*}
\begin{split}
  \mu &- \varepsilon\omega(1+\frac{1}{\alpha})(1+\frac{1}{\beta}) e^{2\lambda_{A_s}h}\\
  &-\varepsilon\Big((1+\alpha)(1+\frac{1}{\beta}){\sigma_{A}}^2\\
  &+(1+\beta)\frac{7}{3}{\sigma_G}^2{\sigma_K}^2\Big)h^2e^{2\lambda_{A_s}h}>0,
\end{split}
\end{equation*}
then
\begin{equation*}
 \lim_{t\to\infty}z(t)=0.
\end{equation*}
\end{corollary}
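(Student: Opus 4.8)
The plan is to mirror the analytic-synchronization argument behind Theorem~\ref{dl:main} rather than to invoke it as a black box, because event-triggered sampling does not literally satisfy Assumption~\ref{ass:a1}: the dwell time $h$ bounds the inter-sampling intervals $t^i_{k+1}-t^i_k$ \emph{below} but not above (indeed a hold may persist forever once events cease). First I would record that the dwell time rules out Zeno behavior, so that on every bounded time interval each $\hat z_i(\cdot)$ has only finitely many jumps and the closed loop \eqref{eq:sys_z} is well posed with $\hat z_i$ piecewise constant. Since here the measurement error and the delay are both absent, on each hold interval $\hat z_i(t)=z_i(t^i_k)$ and hence $z_i(t)-\hat z_i(t)=z_i(t)-z_i(t^i_k)$.

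The key step is a per-agent bound on $z_i(t)-\hat z_i(t)$ obtained by splitting each hold interval $[t^i_k,t^i_{k+1})$ at the dwell boundary $t^i_k+h$. On the post-dwell part $t\in[t^i_k+h,t^i_{k+1})$ the event \eqref{eq:eventcdtn} has not yet fired, which gives directly
\begin{equation*}
  (z_i(t)-\hat z_i(t))^T(z_i(t)-\hat z_i(t)) < \omega\,\hat z_i(t)^T\hat z_i(t);
\end{equation*}
this also covers the terminating case in which no further event fires and the hold extends to $+\infty$, so that $\hat z_i(t)=z_i(t^i_k)$ plays exactly the role of a held measurement whose error obeys the multiplicative bound of Assumption~\ref{ass:a1}(4). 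On the dwell part $t\in[t^i_k,t^i_k+h)$ the elapsed time is at most $h$, so I would bound the growth $\|z_i(t)-z_i(t^i_k)\|$ through \eqref{eq:sys_z} exactly as in the proof of Theorem~\ref{dl:main}, using $\|e^{As}\|\le e^{\lambda_{A_s}s}$, but now with window length $h$ in place of $h+\tau$. Combining the two regimes, for every $t$ and every $i$ the squared error is dominated by the \emph{sum} of the growth estimate (over a window of length $\le h$) and $\omega\,\hat z_i(t)^T\hat z_i(t)$, since this sum dominates the pointwise maximum of the two regime bounds; summing over $i$ yields an aggregate bound on $(z(t)-\hat z(t))^T(z(t)-\hat z(t))$.

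Next I would substitute this aggregate bound into \eqref{eq:dVdtmain} and carry out the same time integration and order-of-integration exchange used for Theorem~\ref{dl:main}, specialized to $\tau=0$. The $\omega$-part reproduces the contribution $\varepsilon\omega(1+\frac{1}{\alpha})(1+\frac{1}{\beta})e^{2\lambda_{A_s}h}$, while the growth part, after the window-length-$h$ estimate and the exchange that converts the integrated $\hat z(s)^T\hat z(s)$ into a multiple of $\hat z(t)^T\hat z(t)$, reproduces $\varepsilon\big((1+\alpha)(1+\frac{1}{\beta})\sigma_{A}^2+(1+\beta)\frac{7}{3}\sigma_G^2\sigma_K^2\big)h^2 e^{2\lambda_{A_s}h}$, with the same Young-inequality splittings governed by $\alpha$ and $\beta$. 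This yields an estimate of the form $dV/dt\le -c\,\hat z(t)^T\hat z(t)$ whose coefficient $c$ equals the left-hand side of the corollary's inequality, hence $c>0$ by hypothesis. The remainder of the convergence argument, from this negative-definite estimate and the lower boundedness of $V$ to $\lim_{t\to\infty}z(t)=0$, is then identical to that of Theorem~\ref{dl:main}.

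The main obstacle is precisely the absence of an upper bound on the inter-sampling times, which is what forbids a direct appeal to Theorem~\ref{dl:main}, whose Assumption~\ref{ass:a1}(1) demands $t^i_{k+1}-t^i_k\le h$. The crux is therefore the interval split of the second paragraph: one must show that the event-triggering inequality itself supplies the multiplicative error bound on the unbounded post-dwell portion, so that the ODE growth estimate is ever only applied on windows of length at most $h$. Verifying that these two regimes recombine to reproduce, term for term, the coefficient structure of \eqref{eq:dlmaincdtn} with $\tau=0$ is the step demanding the most care.
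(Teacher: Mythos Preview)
Your central premise—that one \emph{cannot} appeal to Theorem~\ref{dl:main} as a black box because the event-triggered inter-sampling times are unbounded above—is precisely what the paper sidesteps, and this is the insight you are missing. The paper's proof does not re-run the analytic-synchronization argument at all. Instead it introduces a sequence of \emph{virtual} sampling times $s^i_l$ obtained by subdividing each hold interval $[t^i_k,t^i_{k+1})$ at the grid points $t^i_k, t^i_k+h, t^i_k+2h,\dots$ Consecutive $s^i_l$ are at most $h$ apart, so Assumption~\ref{ass:a1}(1) is restored; and at every $s^i_l$ (either $s^i_l=t^i_k$, where the error is zero, or $s^i_l\ge t^i_k+h$, where the event condition \eqref{eq:eventcdtn} has not yet fired) the held value $\hat z_i(s^i_l)=z_i(t^i_k)$ can be viewed as a measurement of $z_i(s^i_l)$ whose error $z_i(s^i_l)-\hat z_i(s^i_l)$ satisfies Assumption~\ref{ass:a1}(4). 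With $\tau=0$, Theorem~\ref{dl:main} then applies verbatim to the virtual sampling sequence, and the corollary's inequality is exactly \eqref{eq:dlmaincdtn} with $\tau=0$. The whole argument is a few lines.

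Your alternative route—splitting each hold into dwell and post-dwell pieces and bounding the error separately—could in principle be made to work, but your write-up has gaps. First, the ``sum of the two regime bounds'' is not a uniformly valid pointwise bound in the way you describe: the growth bound $G_i(t)$ depends on $t-t^i_k$ and is not defined (or is unbounded) on the post-dwell part, so you must integrate each regime bound over its own time set, not the sum over the union. Second, once you do this carefully you will \emph{not} reproduce the corollary's coefficients term for term: on the dwell part there is no measurement error (so only one Young split is needed and the $(1+\alpha),(1+\tfrac{1}{\alpha})$ factors do not arise), and on the post-dwell part the event bound gives $\omega\|\hat z_i\|^2$ directly with no $e^{2\lambda_{A_s}h}$ factor. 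If anything your approach would yield a \emph{tighter} sufficient condition than the one stated, not the same one; your assertion that the $\omega$-part ``reproduces the contribution $\varepsilon\omega(1+\tfrac{1}{\alpha})(1+\tfrac{1}{\beta})e^{2\lambda_{A_s}h}$'' is unjustified. So while your strategy is not unreasonable, the cleanest and intended argument is the virtual-sampling reduction to Theorem~\ref{dl:main}.
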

\begin{proof}
Note that the definition of $t^i_{k}$ is different from that in Section II and Assumption~1~(1) doesn't hold. However, by event-triggering condition \eqref{eq:eventcdtn}, for any $t$, $t\not\in\cup_k(t^i_k,t^i_k+h)$,
\begin{equation}\label{eq:eventerror}
 (z_i(t)-\hat{z}_i(t))^T(z_i(t)-\hat{z}_i(t))< \omega \hat{z}_i(t)^T\hat{z}_i(t).
\end{equation}
So we can view $\hat{z}_i(t)$ as the sampled state of $z_i(t)$ at time $t$ with measurement error $z_i(t)-\hat{z}_i(t)$, which is exactly Assumption 1~(4).

(Case 1) If time sequence $t^i_0$,  $t^i_1$, $\dots$, is of finite length, let $\bar{k}=\text{arg}\max_k{t^i_k}$. For any $k$, $k<\bar{k}$, there exists a positive integer $n^i_k$ such that $t^i_k+n^i_kh<t^i_{k+1}\leq t^i_{k}+(n^i_{k}+1)h$. Let $s^i_k$, $k=0,1,2,\dots$, denote all the time instants $t^i_k+ph$, $k=0,2,\dots$, $\bar{k}-1$, $p=0,1,\dots,n^i_k$, and $t^i_{\bar{k}}+ph$, $p=0,1,\dots$, in the increasing order.  (Case 2)  If time sequence $t^i_0$,  $t^i_1$, $\dots$, is of infinite length, for any $k$, there exists a positive integer $n^i_k$ such that $t^i_k+n^i_kh<t^i_{k+1}\leq t^i_{k}+(n^i_{k}+1)h$. Let $s^i_k$, $k=0,1,2,\dots$, denote all the time instants $t^i_k+ph$, $k=0,2,\dots$,  $p=0,1,\dots,n^i_k$, in the increasing order.

Then we have that $s^i_{k+1}-s^i_{k}\leq h$ and inequality \eqref{eq:eventerror} holds at any $t=s^i_k$. By Theorem \ref{dl:main}, we have the stability of the system.
\end{proof}

\begin{remark}
\begin{enumerate}
  \item[(1)] The idea of removing the Zeno behavior in event-based control by adding a dwell time was previously presented for networks of single-integrators in \cite{Xiao_scl2016}.
  \item[(2)] Event-triggering condition \eqref{eq:eventcdtn} can be replaced by
\begin{equation*}
  \|z_i(t)-z_i(t^i_k)\|_2\geq \frac{\sqrt{\omega}}{1+\sqrt{\omega}} \|z_i(t)\|_2,
\end{equation*}
which is classified as a  quadratic event-triggering condition in \cite{Heemels_tac2013}.
\end{enumerate}

\end{remark}

 \subsection{Saturated control with input delay}
This subsection discusses the extension of Theorem \ref{dl:main} in the case with input saturations and input delays\cite{JiangZP_scl2013,TYang_2014,SuHS_ijrnc2015,Lim_tac2015,LinZL_ijrnc2016,yuedong_ijc2016 }.

In system \eqref{eq:sysi_z}, $\sum_{j=1}^m g_{ij}K\hat{z}_j(t)$ can be viewed as the input. Instead of applying a saturation function to the input or to each sampled state $\hat{z}_j(t)$, we use a scaler function $\rho(\cdot)$, defined below, to scale down the sampled states to meet the restriction of input saturation:
\begin{equation*}
  \rho(\xi)=\left\{
                       \begin{array}{ll}
                         1, & \mbox{if $\xi=0$}, \\
                         \frac{1}{\lceil\frac{\|\xi\|_{\infty}}{\rho_s}\rceil}, & \mbox{otherwise,} \\
                       \end{array}
                     \right.
\end{equation*}
where $\xi\in\R^N$, $\lceil\cdot\rceil$ denotes the ceiling function (which returns the smallest integer greater than or equal the considered variable) and $\rho_s$ is some positive number decided by the maximum allowable magnitude of input. Clearly, for any $\xi$, $0<\rho(\xi)\leq 1$ and $\|\rho(\xi)\xi\|_{\infty}\leq \rho_s$. Function $\rho(\xi)$ can be also defined in terms of other norms. Then the subsystem $i$ takes the following dynamics:
\begin{equation}\label{eq:syssaturationzi}
  \dot{z}_i(t)=Az_i(t)-\sum_{j=1}^m g_{ij}\rho(\hat{z}_j(t-\tau_{\text{in}}))K\hat{z}_j(t-\tau_{\text{in}}),
\end{equation}
where $\tau_{\text{in}}$ is the input delay.
\begin{theorem}\label{dl:saturation}
Assume that Assumption \ref{ass:a1} holds for all $i$ and there exists a lower bounded function $V(t)$ such that
\begin{equation}\label{eq:dVdtsaturation}
\begin{split}
  \frac{dV(t)}{dt}\leq &-\mu \hat{z}(t-\tau_{\text{in}})^T\hat{z}(t-\tau_{\text{in}})\\
  &+\varepsilon (z(t)-\hat{z}(t-\tau_{\text{in}}))^T\!(z(t)-\hat{z}(t-\tau_{\text{in}})),
\end{split}
\end{equation}
where $\mu>0$ and $\varepsilon>0$.
If there exist some positive numbers $\alpha$ and $\beta$, such that
\begin{equation}\label{eq:saturationcdtn}
\begin{split}
  \mu &- \varepsilon\omega(1+\frac{1}{\alpha})(1+\frac{1}{\beta}) e^{2\lambda_{A_s}(h+\tau+\tau_{\text{in}})}\\
  &-\varepsilon\Big((1+\alpha)(1+\frac{1}{\beta}){\sigma_{A}}^2\\
  &+(1+\beta)\frac{7}{3}{\sigma_G}^2{\sigma_K}^2\Big)(h+\tau+\tau_{\text{in}})^2e^{2\lambda_{A_s}(h+\tau+\tau_{\text{in}})}>0,
\end{split}
\end{equation}
then
\begin{equation*}
 \lim_{t\to\infty}z(t)=0.
\end{equation*}
\end{theorem}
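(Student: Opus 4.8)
The plan is to mirror the proof of Theorem~\ref{dl:main} line by line, reading the input delay $\tau_{\text{in}}$ as an extension of the effective age of each held sample and checking that the saturation scaler $\rho(\cdot)$ never enlarges the control magnitude. First I would fix a subsystem $i$ and a time $t$, and choose the index $k$ with $t-\tau_{\text{in}}\in[t^i_k+\tau^i_k,\,t^i_{k+1}+\tau^i_{k+1})$, so that $\hat z_i(t-\tau_{\text{in}})=z_i(t^i_k)-e^i_k$. The decisive bookkeeping step is that this membership places $t$ in $[t^i_k+\tau^i_k+\tau_{\text{in}},\,t^i_{k+1}+\tau^i_{k+1}+\tau_{\text{in}})$, so that $t-t^i_k<h+\tau+\tau_{\text{in}}$ by Assumption~\ref{ass:a1}~(1),(3). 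This is exactly why every $h+\tau$ in \eqref{eq:dlmaincdtn} becomes $h+\tau+\tau_{\text{in}}$ in \eqref{eq:saturationcdtn}; moreover Assumption~\ref{ass:a1}~(4) transports verbatim, since $\hat z_i(t-\tau_{\text{in}})=\hat z_i(t^i_k+\tau^i_k)$ yields $\|e^i_k\|_2^2\le\omega\,\hat z_i(t-\tau_{\text{in}})^T\hat z_i(t-\tau_{\text{in}})$.

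Next I would write the same error decomposition as in Theorem~\ref{dl:main},
\[
z_i(t)-\hat z_i(t-\tau_{\text{in}})=\int_{t^i_k}^{t}\dot z_i(s)\,ds+e^i_k,
\]
and bound the integrand through the saturated dynamics \eqref{eq:syssaturationzi}. The one genuinely new point is the treatment of $\rho$: because $0<\rho(\xi)\le1$ for every $\xi$, the scaled feedback $\rho(\hat z_j(s-\tau_{\text{in}}))K\hat z_j(s-\tau_{\text{in}})$ is dominated termwise by $\sigma_K\|\hat z_j(s-\tau_{\text{in}})\|_2$, so collecting the row $\{g_{ij}\}_j$ (whose Euclidean norm is at most $\|G\|_2=\sigma_G$) gives $\|\dot z_i(s)\|_2\le\sigma_A\|z_i(s)\|_2+\sigma_G\sigma_K\|\hat z(s-\tau_{\text{in}})\|_2$, identical in form to the estimate behind \eqref{eq:dlmaincdtn}. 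The factor $e^{2\lambda_{A_s}(h+\tau+\tau_{\text{in}})}$ then enters by bounding the free growth of the states across the window $[t^i_k,t]$ of length at most $h+\tau+\tau_{\text{in}}$, and the constant $\tfrac{7}{3}$ is inherited unchanged from the same integral estimate.

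I would then split the squared error $(z(t)-\hat z(t-\tau_{\text{in}}))^T(z(t)-\hat z(t-\tau_{\text{in}}))$ by two applications of $\|a+b\|_2^2\le(1+c)\|a\|_2^2+(1+\tfrac1c)\|b\|_2^2$: the parameter $\beta$ peels the control contribution off the combined system-plus-error part, and $\alpha$ then separates the $\sigma_A$ term from the measurement error. This reproduces the three coefficients $\omega(1+\tfrac1\alpha)(1+\tfrac1\beta)$, $(1+\alpha)(1+\tfrac1\beta)\sigma_A^2$, and $(1+\alpha)(1+\beta)\tfrac73\sigma_G^2\sigma_K^2$ appearing in \eqref{eq:saturationcdtn}. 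Substituting this bound into the assumed inequality \eqref{eq:dVdtsaturation} gives $\frac{dV(t)}{dt}\le -C\,\hat z(t-\tau_{\text{in}})^T\hat z(t-\tau_{\text{in}})$, where $C>0$ is precisely the left-hand side of \eqref{eq:saturationcdtn}. Since $V$ is lower bounded, integration yields $\int_0^\infty\hat z(t-\tau_{\text{in}})^T\hat z(t-\tau_{\text{in}})\,dt<\infty$, and the error estimate then forces $z(t)\to0$, exactly as in the closing argument of Theorem~\ref{dl:main}.

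I expect the main obstacle to be bookkeeping rather than conceptual. The state-dependent scaler $\rho(\hat z_j(s-\tau_{\text{in}}))$ destroys the clean Kronecker product $(G\otimes K)$ that underlies the matrix manipulations of Theorem~\ref{dl:main}, so the delicate point is to confirm that every place relying on that product can instead be controlled by the scalar bound $\sigma_G\sigma_K$ through $\rho\le1$, and that the shift by $\tau_{\text{in}}$ is propagated consistently through each sampling interval, the error inequality, and the final convergence step.
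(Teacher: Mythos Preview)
Your proposal is correct and matches the paper's own proof, which is just a two-line sketch: reinterpret $\tau^i_k+\tau_{\text{in}}$ as the effective sampling delay (so every $h+\tau$ in Theorem~\ref{dl:main} becomes $h+\tau+\tau_{\text{in}}$), and absorb the saturation by observing that the largest singular value of the time-varying matrix $[g_{ij}\rho(\hat z_j(t-\tau_{\text{in}}))]=G\,\diag(\rho(\hat z_1),\dots,\rho(\hat z_m))$ is at most $\sigma_G$ since $0<\rho\le1$. Two small slips to clean up: the third coefficient should be $(1+\beta)\tfrac{7}{3}\sigma_G^2\sigma_K^2$, not $(1+\alpha)(1+\beta)\tfrac{7}{3}\sigma_G^2\sigma_K^2$ (your verbal description of the $\alpha,\beta$ splitting is right, only the displayed list is off); and the matrix-level singular value bound on $G\,\diag(\rho_j)$ plugs more directly into the $G^TG\otimes K^TK$ terms of Theorem~\ref{dl:main}'s proof than the row-wise estimate you sketched.
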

\begin{proof}
The proof follows the same procedure as in the proof of Theorem \ref{dl:main}.  Here, $\tau^i_k+\tau_{\text{in}}$ is viewed as the time delay in sampling $z_i(t)$. We also use the property that
the largest singular value of $G$ is not less than that of matrix $[g_{ij}\rho(\hat{z}_j(t-\tau_{\text{in}}))]$.
\end{proof}

Note that neither $\rho(\cdot)$ nor $\rho_s$ appears in Theorem \ref{dl:saturation}. However, the saturated input with function $\rho(\cdot)$ as well as initial states has a significant  impact on the existence of a valid feedback gain $K$ and a function $V(t)$ satisfying inequality \eqref{eq:dVdtsaturation}. For example, when we know that the states of all subsystem \eqref{eq:syssaturationzi} are bounded by Lyapunov methods, then $\rho(\hat{z}_j(t-\tau_{\text{in}}))$ has a finite set of all possible values. The considered system is a switched system with state-dependent switching between a finite number of subsystems. $V(t)$ can be chosen to be a common Lyapunov function\cite{Sun_book2011}. Initial states are closely related to the upper bound of system states and thus affect the number of subsystems in the resulting switched system.

\begin{example}
Consider the network of single-integrators described by equation \eqref{sys:agenti}, where $N=1$, $A=0$, and $B=K=1$. Revise protocol \eqref{eq:procedge} by multiplying each sampled state by the function $\rho(\cdot)$. Denote $\Lambda(t)=\diag([\rho(\hat{z}_1(t-\tau_{\text{in}}))~\rho(\hat{z}_2(t-\tau_{\text{in}}))~\dots~\rho(\hat{z}_m(t-\tau_{\text{in}}))])$, and suppose that $\rho_m=\min\{\rho(\hat{z}_i(t-\tau_{\text{in}})):i=1,2,\dots,m\}$ is larger than $0$. We obtain that
 \begin{equation*}
   \dot{x}(t)=-D\Lambda(t)\hat{z}(t-\tau_{\text{in}}).
 \end{equation*}
 Consider the Lyapunov candidate given by \eqref{eq:Vdelta2}.
We have
\begin{equation*}
  \begin{split}
     \frac{dV(t)}{dt} =& -z(t)\Lambda(t)\hat{z}(t-\tau_{\text{in}})\\
       = & -\hat{z}(t-\tau_{\text{in}})^T\Lambda(t)\hat{z}(t-\tau_{\text{in}})\\
       &-\hat{z}(t-\tau_{\text{in}})^T\Lambda(t)(z(t)-\hat{z}(t-\tau_{\text{in}}))\\
       \leq & -(\rho_m-\frac{1}{2\gamma})\hat{z}(t-\tau_{\text{in}})^T\Lambda(t)\hat{z}(t-\tau_{\text{in}})\\
       &+\frac{\gamma}{2}(z(t)-\hat{z}(t-\tau_{\text{in}}))^T(z(t)-\hat{z}(t-\tau_{\text{in}})),
   \end{split}
\end{equation*}
where parameter $\gamma>{1}/{(2\rho_m)}$.

\end{example}

\section{Numerical Examples}
\begin{figure}[h]\centering
      \includegraphics[width=5cm]{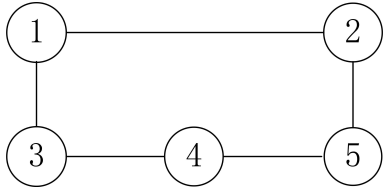}
      \caption{Interaction Topology}
      \label{fig:topology}
\end{figure}
This section presents three numerical examples to demonstrate the effectiveness of the theoretical results.

\begin{example}[Quantized sampling]\label{lz:1}
We first consider the network of harmonic oscillators under the topology depicted in Fig.~\ref{fig:topology}. These oscillators take the dynamics given by \eqref{sys:agenti}. The system matrix $A$ and input $B$ are given by
\begin{equation*}
  A=\left[
      \begin{array}{cc}
        0 & 1 \\
        -1 & 0 \\
      \end{array}
    \right],\quad
B=\left[
      \begin{array}{c}
        0\\
        1\\
      \end{array}
    \right].
\end{equation*}
Let $\lambda=\lambda_2$ and $\mu=1$. Solving equation \eqref{eq:Riccati}, we obtain $K=[0.5626~1.0633]$.
Apply the Logorithmic quantizer \eqref{eq:quantizerlog} with $\epsilon=1.1$ to the relative states in protocol \eqref{eq:procedge}. By Theorem \ref{dl:relative}, we obtain that if $h+\tau<0.017$, then the system solves the average consensus problem. Let the sampling periods randomly generated in  $[0.005,0.012]$ and the delays are random and not larger than $0.005$. The states of oscillators and quantized relative states with time delays are given in Fig.~\ref{fig:1_1} and Fig.~\ref{fig:1_2}
\begin{figure}[h]\centering
      \includegraphics[width=6cm]{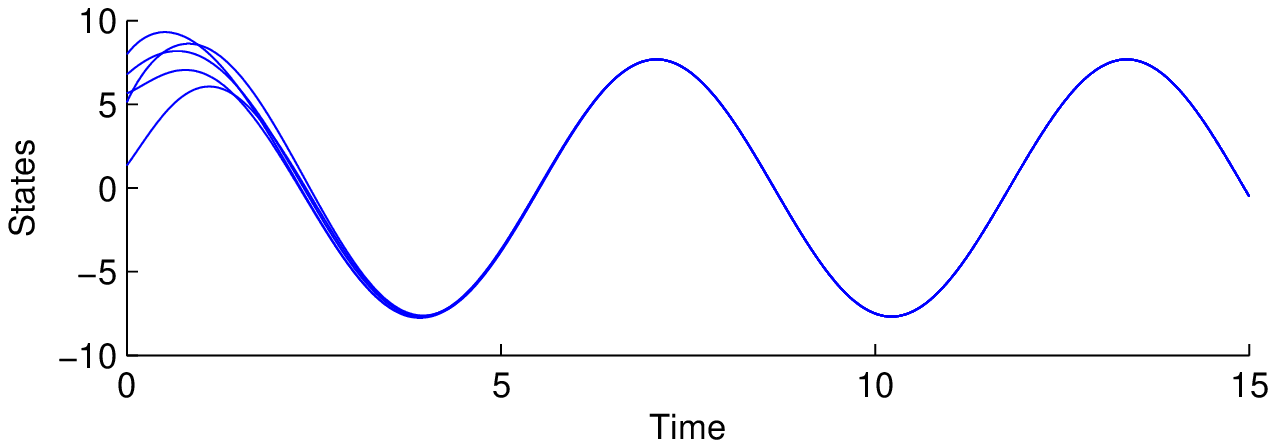}\\
      \includegraphics[width=6cm]{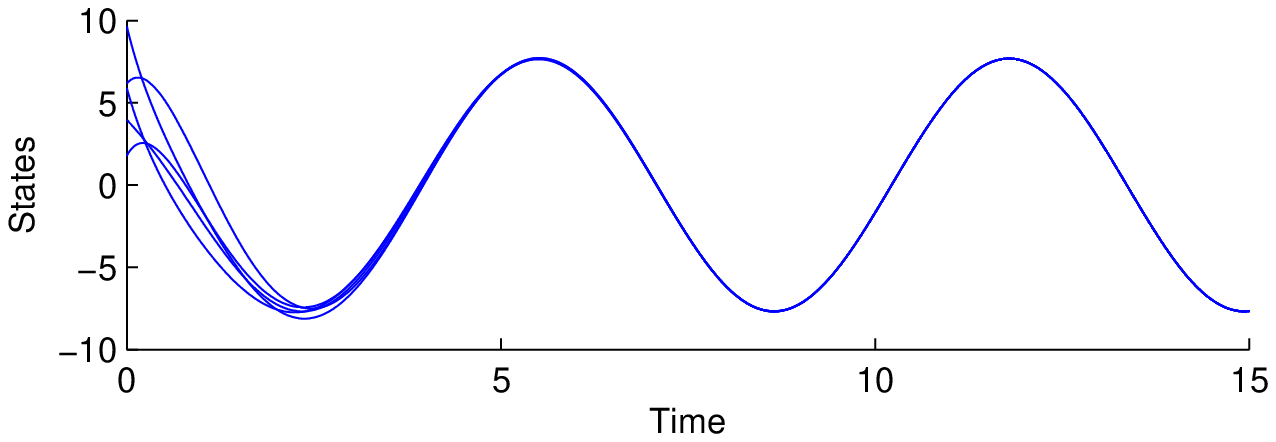}
      \caption{State trajectories of oscillators. The two figures show the first and second components of states, respectively.}
      \label{fig:1_1}
\end{figure}
\begin{figure}[h]\centering
      \includegraphics[width=6cm]{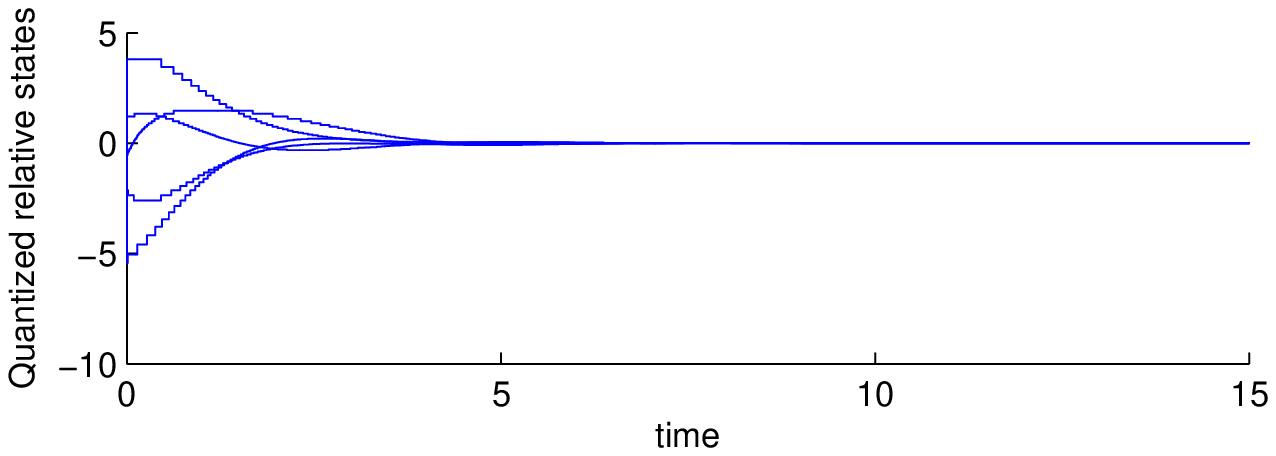}\\
      \includegraphics[width=6cm]{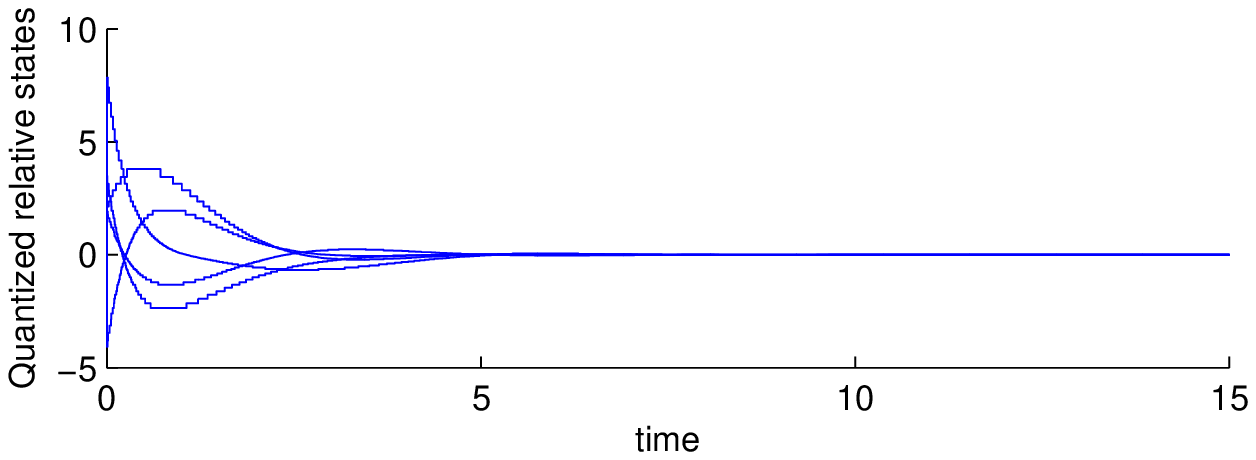}
      \caption{Quantized relative states with time delays. The two figures show the first and second components of quantized relative states, respectively.}
      \label{fig:1_2}
\end{figure}
\end{example}

\begin{example}\label{lz:2}
Consider the network of single integrators under the topology depicted in Fig.~\ref{fig:topology}. If protocol \eqref{eq:procagent} is used, then by Theorem \ref{dl:singleintegrator:brdcst}, $h+\tau<0.0691$ is sufficient for state consensus in the absence of measurement errors. By the result in \cite{Xie_acc2009} (see Remark \ref{rmk:periodic}), $h\leq 0.5528$ is a necessary and sufficient condition for state consensus in the synchronous periodic sampling case without time delays and measurement errors. This result is  not applicable to our system because of different model setups and assumptions. Moreover, no necessary and sufficient results have been derived for the asynchronous time-delayed system studied in this paper in the literature.
\end{example}

\begin{example}[Event-triggered broadcasting]\label{lz:3}
Consider the system in Example \ref{lz:2} with  $h=0.025$ and $\tau=0.02$. Denote $\hat{x}_i((t^i_k+\tau^i_k)^-)=\lim_{t\uparrow t^i_k+\tau^i_k}\hat{x}_i(t)$, which denotes the most recent broadcasted data of $x_i(t)$ before time $t^i_k$. If the following event-triggering condition holds:
\begin{equation*}
|x_i(t^i_k)-\hat{x}_i((t^i_k+\tau^i_k)^-)|> \min\{0.3 |\hat{x}_i((t^i_k+\tau^i_k)^-)|,0.08\},
\end{equation*}
then $x_i(t^i_k)$ is broadcasted to the neighbors of agent $i$. Then we have that  $\Delta(h)=0.2894$.
By Theorem \ref{dl:brdcst:error}, the system solves the average consensus problem with the error $0.4535$ given by \eqref{eq:consensuserror}, where $\alpha=0.5$, $\gamma=3.188$, and $\eta=1.6$.  Let the sampling periods randomly generated in $[0.02,0.025]$ and the delays are random and
not larger than 0.02. The state trajectories and event numbers are given in Fig.~\ref{fig:3_1} and Fig.~\ref{fig:3_2}, respectively.
\begin{figure}[h]\centering
      \includegraphics[width=6cm]{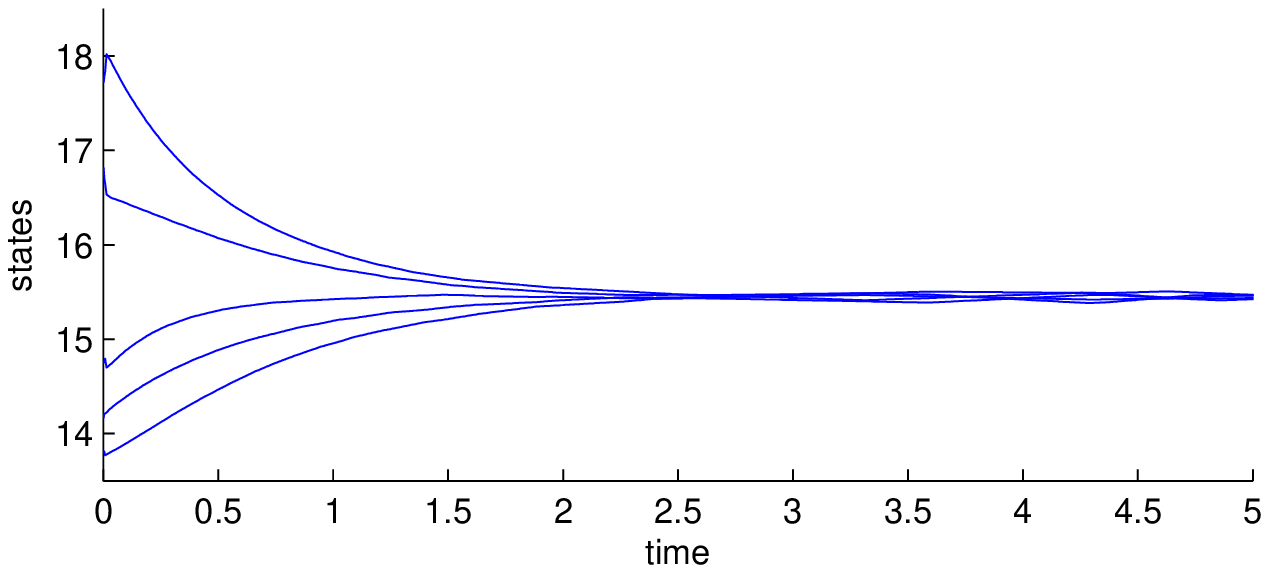}\\
      \includegraphics[width=6cm]{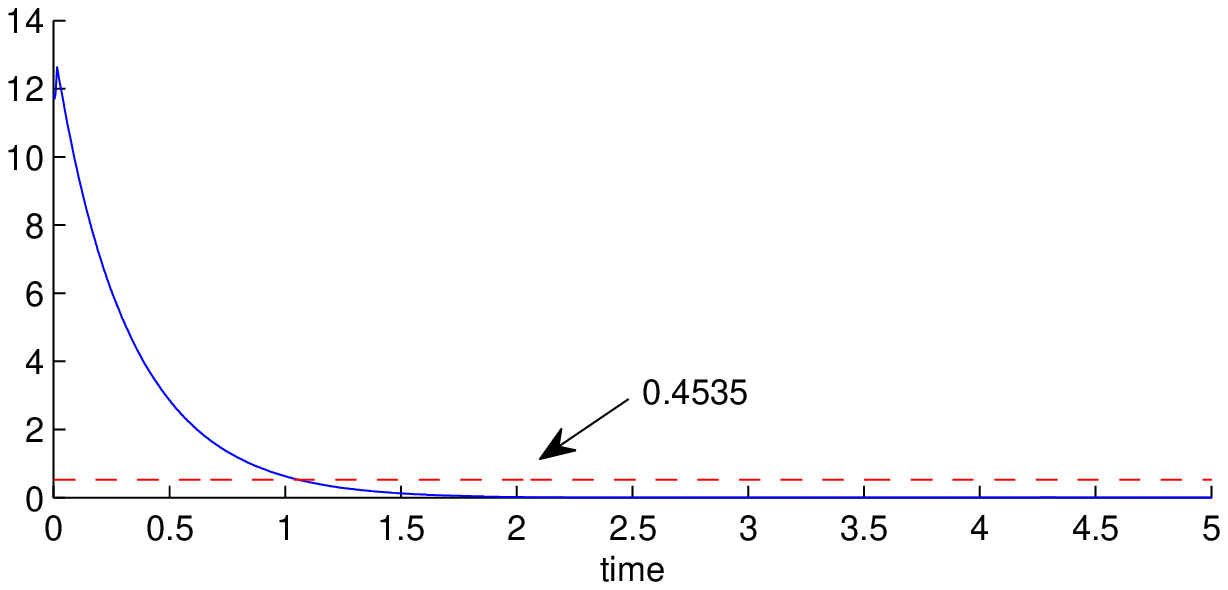}
      \caption{The two figures show the state trajectories and  the trajectory of $\delta(t)^T\delta(t)$, respectively.}
      \label{fig:3_1}
\end{figure}
\begin{figure}[h]\centering
      \includegraphics[width=6cm]{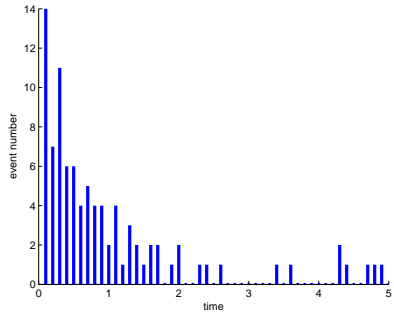}
      \caption{Event number every 1/10 unit of time.}
      \label{fig:3_2}
\end{figure}

\end{example}

\section{Conclusions}
This paper studied the asynchronous stability of a sampled-data multi-agent system and showed its application in solving asynchronous consensus problems and its potential extensions to quantized and event-triggered systems. Although the presented results have addressed several control problems in one framework, the studied system indeed has a quite special  structure. So we are also expecting further detailed work on more general  asynchronous systems, such as with directed information links and heterogeneous individual dynamics.

\section*{Appendix}
\subsection*{Preliminary lemmas:}
\begin{lemma}\label{yl:singularvalue}
For positive number $t$ and any real square matrix $A$ with $A^T+A\not=0$, the largest singular values of $e^{At}$, $e^{At}-I$, and $\int_{0}^te^{A(t-s)}ds$ are bounded above by $e^{\lambda_{A_s} t}$, $ {\sigma_A}(e^{\lambda_{A_s} t}-1)/{\lambda_{A_s}}$, $(e^{\lambda_{A_s} t}-1)/{\lambda_{A_s}}$, respectively, where $I$ is the identity matrix with compatible dimensions, and $\lambda_{A_s}$ and $\sigma_A$ are the largest eigenvalue of $(1/2)(A+A^T)$ and the largest singular value of $A$, respectively.
\end{lemma}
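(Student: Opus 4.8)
The plan is to bound each of the three matrices by relating them to the symmetric part of $A$ through the spectral properties of $e^{At}$. First I would establish the bound on $e^{At}$ itself, since the bounds on $e^{At}-I$ and $\int_0^t e^{A(t-s)}\,ds$ will follow from it. The key observation is that for any square matrix $A$, the largest singular value of $e^{At}$ equals $\|e^{At}\|_2$, and by the standard logarithmic-norm (matrix measure) estimate one has
\begin{equation*}
  \frac{d}{dt}\|e^{At}v\|_2 \leq \lambda_{A_s}\|e^{At}v\|_2
\end{equation*}
for any fixed vector $v$, where $\lambda_{A_s}$ is the largest eigenvalue of $(1/2)(A+A^T)$. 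Integrating this differential inequality gives $\|e^{At}v\|_2\leq e^{\lambda_{A_s}t}\|v\|_2$, and taking the supremum over unit vectors $v$ yields that the largest singular value of $e^{At}$ is bounded above by $e^{\lambda_{A_s}t}$. Equivalently, one can argue spectrally: $\|e^{At}\|_2^2$ is the largest eigenvalue of $e^{A^Tt}e^{At}$, and this product is dominated in the ordering of symmetric matrices by $e^{(A+A^T)t}$, whose largest eigenvalue is $e^{2\lambda_{A_s}t}$.

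Next I would handle $e^{At}-I$ and the integral together by writing both as integrals of $e^{As}$. For the integral term, since $\int_0^t e^{A(t-s)}\,ds = \int_0^t e^{Au}\,du$ by the change of variable $u=t-s$, the triangle inequality for the operator norm gives
\begin{equation*}
  \Big\|\int_0^t e^{Au}\,du\Big\|_2 \leq \int_0^t \|e^{Au}\|_2\,du \leq \int_0^t e^{\lambda_{A_s}u}\,du = \frac{e^{\lambda_{A_s}t}-1}{\lambda_{A_s}},
\end{equation*}
which is exactly the claimed bound. For $e^{At}-I$, I would use the identity $e^{At}-I = A\int_0^t e^{Au}\,du$, so that its largest singular value is at most $\sigma_A$ times the bound just obtained for the integral, giving $\sigma_A(e^{\lambda_{A_s}t}-1)/\lambda_{A_s}$. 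The hypothesis $A^T+A\neq 0$ guarantees that $\lambda_{A_s}\neq 0$ is not forced, but more importantly ensures the expression $(e^{\lambda_{A_s}t}-1)/\lambda_{A_s}$ is the correct nontrivial form; the degenerate case is excluded so that the stated formulas are meaningful.

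I expect the main subtlety to lie in justifying the first bound $\|e^{At}\|_2\leq e^{\lambda_{A_s}t}$ cleanly, since $A$ need not be symmetric or normal and so $e^{At}$ need not have singular values governed directly by the eigenvalues of $A$. The logarithmic-norm argument sidesteps this by working with $A+A^T$ rather than $A$, and the comparison $e^{A^Tt}e^{At}\preceq e^{(A+A^T)t}$ must be applied with care because it is not an equality in general (it holds as a consequence of the Golden--Thompson-type monotonicity, or more elementarily from the differential-inequality derivation). Once that first estimate is secured, the remaining two bounds are routine consequences of the integral representations and submultiplicativity of the operator norm.
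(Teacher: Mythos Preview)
Your argument for the first bound is essentially the paper's: both set $y(t)=e^{At}x$, differentiate $\|y(t)\|_2^2$, use $y^T(A+A^T)y\le 2\lambda_{A_s}\|y\|_2^2$, and integrate. One caution: your ``equivalent'' spectral route, namely $e^{A^Tt}e^{At}\preceq e^{(A+A^T)t}$ in the Loewner order, is not a valid inequality in general (and Golden--Thompson concerns traces, not matrix ordering). What is true is only the scalar consequence $\lambda_{\max}(e^{A^Tt}e^{At})\le e^{2\lambda_{A_s}t}$, which is exactly what the differential-inequality argument gives; so drop the alternative phrasing and keep the first derivation.

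For $e^{At}-I$ and $\int_0^t e^{A(t-s)}\,ds$ your approach genuinely differs from the paper's. The paper repeats the Lyapunov/comparison-principle machinery separately for each matrix: it sets $y(t)=(e^{At}-I)x$ (respectively $y(t)=\int_0^t e^{A(t-s)}\,ds\,x$), differentiates $V=y^Ty$, bounds the resulting cross term by another comparison argument, and integrates once more. Your route---write $\int_0^t e^{A(t-s)}\,ds=\int_0^t e^{Au}\,du$ and $e^{At}-I=A\int_0^t e^{Au}\,du$, then apply the norm triangle inequality and submultiplicativity together with the already-proved bound $\|e^{Au}\|_2\le e^{\lambda_{A_s}u}$---is shorter and more transparent, and it yields the identical constants. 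The paper's method has the mild advantage of being self-contained in the quadratic-form language used throughout the later estimates, but yours is the cleaner proof of the lemma itself. A small side note: the hypothesis $A+A^T\neq 0$ does not by itself force $\lambda_{A_s}\neq 0$; the paper elsewhere handles $\lambda_{A_s}=0$ by interpreting $(e^{\lambda_{A_s}t}-1)/\lambda_{A_s}$ as its limit $t$, and your integral bounds pass to that limit without change.
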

\begin{proof}
\noindent {(1)} To show that $e^{\lambda_{A_s} t}$ is an upper bound of the singular values of $e^{At}$, it is sufficient to show that $x^T (e^{At})^Te^{At}x\leq e^{2\lambda_{A_s} t}x^Tx$ holds for any vector $x$. Let $y(t)=e^{At}x$ and let $V(t)=y(t)^Ty(t)$. Then ${dV(t)}/{dt}=2y(t)^TA e^{At}x=y(t)^T(A+A^T)y(t)\leq 2\lambda_{A_s} y(t)^Ty(t)=2\lambda_{A_s} V(t) $. By the Comparison Principle of differential equations, $V(t)\leq e^{2\lambda_{A_s} t}V(0)$;  that is,  $x^T {(e^{At})}^Te^{At}x\leq e^{2\lambda_{A_s} t}x^Tx$.

\noindent {(2)} Let $y(t)=(e^{At}-I)x$ and let $V(t)=y(t)^Ty(t)$. We have that
$
{dy(t)}/{dt}=Ae^{At}x=Ay(t)+Ax
$
and
\begin{equation*}
\begin{split}
  \frac{dV(t)}{dt}&=2y(t)^T(Ay(t)+Ax)\\
  &\leq 2\lambda_{A_s} y(t)^Ty(t)+ 2x^T(e^{At}-I)^TAx,
\end{split}
\end{equation*}
where, by the Comparison Principle of differential equations, $  x^T(e^{At}-I)^TAx\leq ({\sigma_A}^2/\lambda_{A_s})(e^{\lambda_{A_s} t}-1)x^Tx$ (In detail, $d/dt(x^T(e^{At}-I)^TAx)=x^TA^T(e^{At})^TAx\leq e^{\lambda_{A_s} t}x^TA^TAx\leq {\sigma_A}^2e^{\lambda_{A_s} t}x^Tx=d/dt(({\sigma_A}^2/\lambda_{A_s})(e^{\lambda_{A_s} t}-1)x^Tx)$.)
Employing the Comparison Principle of differential equations again,
 \begin{equation*}
 \begin{split}
   V(t)&\leq\frac{2{\sigma_A}^2}{\lambda_{A_s}}\int_{0}^t e^{2\lambda_{A_s} (t-s)} (e^{\lambda_{A_s} s}-1) ds x^Tx\\
   &=\frac{{\sigma_A}^2}{{\lambda_{A_s}}^2} (e^{\lambda_{A_s} t}-1)^2x^Tx.
 \end{split}
 \end{equation*}
 Thus the  singular values of $e^{At}-I$ are  bounded above by ${\sigma_A}(e^{\lambda_{A_s} t}-1)/{\lambda_{A_s}}$.

 \noindent {(3)}  Let $y(t)=\int_{0}^te^{A(t-s)}dsx$  and let $V(t)=y(t)^Ty(t)$. Then $dy(t)/dt=Ay(t)+x$ and
\begin{equation*}
  \frac{dV(t)}{dt}=2y(t)^TAy(t)+2x^T\int_{0}^te^{A(t-s)}dsx.
\end{equation*}
With the same arguments as before, we have that $   x^T\int_{0}^te^{A(t-s)}dsx\leq (({e^{\lambda_{A_s} t}-1})/{\lambda_{A_s}})x^Tx$,
and thus
the  singular values of $\int_{0}^te^{A(t-s)}ds$ are  bounded above by $(e^{\lambda_{A_s} t}-1)/{\lambda_{A_s}}$.
\end{proof}
\begin{lemma}\label{yl:e:equ}
The following inequalities hold for any nonnegative number  $t$:
\begin{enumerate}
  \item[(1)] $e^{2t}-4e^{t}+3+2t\leq (2t^3/3) e^{2t}$;
  \item[(2)] $t\leq e^{t}-1\leq t e^{t}$.
\end{enumerate}
\end{lemma}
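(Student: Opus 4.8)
The plan is to prove the two parts separately, disposing of part (2) first by elementary monotonicity and then treating part (1) by a power-series comparison, since the latter is where the genuine content lies.

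For part (2) I would introduce two auxiliary functions. For the left inequality $t\le e^{t}-1$, set $f(t)=e^{t}-1-t$; then $f(0)=0$ and $f'(t)=e^{t}-1\ge 0$ for $t\ge 0$, so $f$ is nondecreasing and hence $f(t)\ge 0$. For the right inequality $e^{t}-1\le t e^{t}$, set $g(t)=(t-1)e^{t}+1$; then $g(0)=0$ and $g'(t)=t e^{t}\ge 0$ for $t\ge 0$, so $g(t)\ge 0$. This settles (2) at once.

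For part (1) the crucial point is that the low-order terms on the left cancel, so both sides are truly of order $t^{3}$ near the origin. A tempting shortcut is to write the left side as $u^{2}-2u+2t$ with $u=e^{t}-1$ and use $u\ge t$ to get $u^{2}-2u+2t\le u^{2}=(e^{t}-1)^{2}\le t^{2}e^{2t}$ via (2); but this bound is $O(t^{2})$ and hence too weak for small $t$, succeeding only for $t\ge 3/2$. To capture the correct order uniformly I would instead expand everything as a power series. Writing $e^{2t}-4e^{t}=\sum_{n\ge 0}\frac{2^{n}-4}{n!}t^{n}$, the $n=0,1,2$ terms equal $-3$, $-2t$, and $0$, so they exactly cancel the $+3+2t$ and
\begin{equation*}
 e^{2t}-4e^{t}+3+2t=\sum_{n\ge 3}\frac{2^{n}-4}{n!}\,t^{n}.
\end{equation*}
Reindexing likewise gives $\frac{2t^{3}}{3}e^{2t}=\sum_{n\ge 3}\frac{2^{\,n-2}}{3\,(n-3)!}\,t^{n}$, whose leading ($n=3$) coefficient is $\frac{2}{3}$, matching that of the left side exactly. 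Since all coefficients on both sides are nonnegative for $n\ge 3$ and $t\ge 0$, it suffices to compare them term by term, i.e.\ to show $2^{n}-4\le \frac{2^{\,n-2}}{3}\,n(n-1)(n-2)$ for every $n\ge 3$.

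This final scalar inequality is where the only real subtlety lies, and I expect it to be the main obstacle: it holds with \emph{equality} at $n=3$ (both sides equal $4$), which is exactly why the tightest term forces the constant $\tfrac{2}{3}$ and why the crude global bound above must fail near $t=0$. For $n\ge 4$ I would use $2^{n}-4\le 2^{n}=4\cdot 2^{\,n-2}$ together with $n(n-1)(n-2)\ge 24$, giving $\frac{2^{\,n-2}}{3}\,n(n-1)(n-2)\ge 8\cdot 2^{\,n-2}>4\cdot 2^{\,n-2}\ge 2^{n}-4$. Combining the equality at $n=3$ with strict domination for $n\ge 4$ yields the term-by-term inequality, and summing over $n\ge 3$ completes the proof of (1).
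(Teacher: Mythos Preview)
Your proof is correct. Part (2) is handled by the standard monotonicity argument, and for part (1) your power-series comparison is clean: the cancellation of the $n=0,1,2$ terms is exactly the point, the $n=3$ coefficients match (both equal $\tfrac{2}{3}$), and your verification that $2^{n}-4\le \tfrac{2^{\,n-2}}{3}n(n-1)(n-2)$ for $n\ge 4$ via $n(n-1)(n-2)\ge 24$ is valid.

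As for comparison: the paper states this lemma without proof, so there is no argument to compare your approach against. Your termwise series bound is a natural and efficient way to establish (1); one could alternatively differentiate $h(t)=\tfrac{2t^{3}}{3}e^{2t}-(e^{2t}-4e^{t}+3+2t)$ several times and track sign changes, but the series method you chose avoids that bookkeeping and makes the tightness at $t=0$ (equality at the $n=3$ coefficient) transparent.
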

\subsection*{Proof of Theorem \ref{dl:main}:}
 Only the asymptotical stability of system \eqref{eq:sys_z} is studied, so the system dynamics in the first few seconds is ignored and assume that $t\geq \max\nolimits_i (t^i_0+\tau^i_0)$. Denote all the time instants $t^i_k, t^i_k+\tau^i_{k}$, $i=1,2,\dots, m$, $k=0,1,\dots$, at or after time $\max\nolimits_i (t^i_0+\tau^i_0)$,  by a single sequence $t_0$, $t_1$, $t_2$, $\dots$, in the increasing order $t_k<t_{k+1}$, $k=0,1,\dots$. Clearly, $\hat{z}(t)$ is a constant in each time interval $[t_k,t_{k+1})$, $k=0,1,\dots$.\\

{\noindent (1) \it Evaluation of $\int_{s=t_0}^t(z(s)-\hat{z}(s))^T(z(s)-\hat{z}(s))ds$.}

Consider the time interval $t\in [t^i_k+\tau^i_k, t^i_{k+1}+\tau^i_{k+1}]$. By \eqref{eq:sys_z},
\begin{equation*}
\begin{split}
z_i(t)-\hat{z}_i(t)=&(e^{A(t-t^i_k)}-I_N)\hat{z}_i(t^i_k+\tau^i_k)+ e^{A(t-t^i_k)}e^i_k\\
&-\int_{t^i_{k}}^t e^{A(t-s)}(G_i\otimes K)\hat{z}(s)ds \\
\end{split}
\end{equation*}
and thus
\begin{align}
  (z_i(t)-&\hat{z}_i(t))^T(z_i(t)-\hat{z}_i(t))\nonumber\\
  \leq & (1+\alpha)(1+\frac{1}{\beta})\hat{z}_i(t^i_k+\tau^i_k)^T(e^{A(t-t^i_k)}-I_N)^T\nonumber\\
  &\times(e^{A(t-t^i_k)}-I_N)\hat{z}_i(t^i_k+\tau^i_k)\nonumber\\
  &+(1+\frac{1}{\alpha})(1+\frac{1}{\beta}){e^i_k}^T(e^{A(t-t^i_k)})^Te^{A(t-t^i_k)}e^i_k\nonumber\\
  &+ (1+\beta)\bigg(\int_{t^i_{k}}^t e^{A(t-s)}(G_i\otimes K)\hat{z}(s)ds\bigg)^T\nonumber\\
   &\times\int_{t^i_{k}}^t e^{A(t-s)}(G_i\otimes K)\hat{z}(s)ds,\label{eq:z_minus_hat_z}
\end{align}
where  $G_i$ is the $i$-th row of matrix $G$.

Consider the first term on the right side of inequality \eqref{eq:z_minus_hat_z}. By Lemma \ref{yl:singularvalue} and Lemma \ref{yl:e:equ} in the appendix,
  \begin{align}
    \hat{z}_i(t^i_k+&\tau^i_k)^T(e^{A(t-t^i_k)}-I_N)^T(e^{A(t-t^i_k)}-I_N)\hat{z}_i(t^i_k+\tau^i_k)\nonumber\\
    \leq & \frac{{\sigma_{A}}^2}{{\lambda_{A_s}}^2 }(e^{\lambda_{A_s}(t-t^i_k)}-1)^2  \hat{z}_i(t^i_k+\tau^i_k)^T\hat{z}_i(t^i_k+\tau^i_k)\nonumber\\
\leq & {\sigma_{A}}^2(h\!+\!\tau)^2e^{2\lambda_{A_s}(h\!+\!\tau)}  \hat{z}_i(t^i_k\!+\!\tau^i_k)^T\hat{z}_i(t^i_k\!+\!\tau^i_k).\label{eq:leq1}
  \end{align}

Consider the third term on the right  side of inequality \eqref{eq:z_minus_hat_z}. Suppose that $t_{p_1}=t^i_k$,  $t_{p_2}=t^i_k+\tau^i_k$, and $t_{p_3} <t\leq t_{p_3+1} \leq t^i_{k+1}+\tau^i_{k+1}$, and denote $h^i_k=t^i_{k+1}-t^i_k$.
\begin{equation*}
\begin{split}
 \int_{t^i_{k}}^t &e^{A(t-s)}(G_i\otimes K)\hat{z}(s)ds\\
 =&\sum_{l=p_1}^{p_3-1}\frac{t_{l+1}-t_l}{h^i_k+\tau^i_{k+1}}\frac{h^i_k+\tau^i_{k+1}}{t_{l+1}-t_l}\\
 &\times\int_{t_l}^{t_{l+1}} e^{A(t_{l+1}-s)}ds~ e^{A(t-t_{l+1})}(G_i\otimes K)\hat{z}(t_l)\\
 &+\frac{t_{p_3+1}-t_{p_3}}{h^i_k+\tau^i_{k+1}}\frac{h^i_k+\tau^i_{k+1}}{t_{p_3+1}-t_{p_3}}\\
 &\times\int_{t_{p_3}}^{t} e^{A(t-s)}ds(G_i\otimes K)\hat{z}(t_{p_3}).\\
\end{split}
\end{equation*}
By the convexity of the square function and Lemma \ref{yl:singularvalue},
\begin{align}
\bigg(\int_{t^i_{k}}^t & e^{A(t-s)}(G_i\otimes K)\hat{z}(s)ds\bigg)^T\!\! \int_{t^i_{k}}^t e^{A(t-s)}(G_i\otimes K)\hat{z}(s)ds\nonumber \allowdisplaybreaks\\\nonumber
    \leq & \sum_{l=p_1}^{p_3-1}\frac{h^i_k+\tau^i_{k+1}}{t_{l+1}\!-\!t_l}\bigg(\int_{t_l}^{t_{l+1}} e^{A(t_{l+1}-s)}ds\allowdisplaybreaks\\
    &\times e^{A(t-t_{l+1})}(G_i\otimes K)\hat{z}(t_l)\bigg)^T \nonumber\allowdisplaybreaks\\\nonumber
    &\times \int_{t_l}^{t_{l+1}} e^{A(t_{l+1}-s)}ds~ e^{A(t-t_{l+1})}(G_i\otimes K)\hat{z}(t_l)\allowdisplaybreaks\\\nonumber
    &+\frac{h^i_k+\tau^i_{k+1}}{t_{p_3+1}-t_{p_3}}\bigg(\int_{t_{p_3}}^{t} e^{A(t-s)}ds(G_i\otimes K)\hat{z}(t_{p_3})\bigg)^T\allowdisplaybreaks\\\nonumber
    &\times \int_{t_{p_3}}^{t} e^{A(t-s)}ds(G_i\otimes K)\hat{z}(t_{p_3})\allowdisplaybreaks\\\nonumber
    \leq & \sum_{l=p_1}^{p_3-1}\frac{h^i_k+\tau^i_{k+1}}{{\lambda_{A_s}}^2(t_{l+1}-t_l)} (e^{\lambda_{A_s}(t_{l+1}-t_{l})}-1)^2e^{2\lambda_{A_s}(t-t_{l+1}) }\allowdisplaybreaks\\
    &\times\hat{z}(t_l)^T({G_i}^T G_i\otimes K^TK)\hat{z}(t_l)\nonumber\allowdisplaybreaks\\
    &+\frac{h^i_k+\tau^i_{k+1}}{{\lambda_{A_s} }^2(t_{p_3+1}-t_{p_3})}(e^{\lambda_{A_s}(t-t_{p_3})}-1)^2\nonumber\allowdisplaybreaks\\
    &\times\hat{z}(t_{p_3})^T({G_i}^T G_i\otimes K^TK)\hat{z}(t_{p_3}).\label{eq:leq2}\allowdisplaybreaks
\end{align}

Notice that inequality \eqref{eq:leq2} holds for any $t$ and the corresponding properly defined indexes $t_{p_1}$, $t_{p_2}$, and $t_{p_3}$. Combining inequalities \eqref{eq:z_minus_hat_z}, \eqref{eq:leq1} and \eqref{eq:leq2} gives that
\begin{align}
\int_{t^i_k+\tau^i_k}^{t}&(z_i(s)-\hat{z}_i(s))^T(z_i(s)-\hat{z}_i(s))ds\nonumber\allowdisplaybreaks\\
  \leq &   \Big((1+\alpha){\sigma_{A}}^2(h+\tau)^2+(1+\frac{1}{\alpha})\omega\Big)(1+\frac{1}{\beta})\nonumber\allowdisplaybreaks\\
  &\times e^{2\lambda_{A_s}(h+\tau)}(t-t_{p_2})  \hat{z}_i(t^i_k+\tau^i_k)^T\hat{z}_i(t^i_k+\tau^i_k)\nonumber\allowdisplaybreaks\\
  &+(1+\beta)\sum_{l=p_1}^{p_2-1}\frac{h^i_k+\tau^i_{k+1}}{{\lambda_{A_s}}^2(t_{l+1}-t_l)}(e^{\lambda_{A_s}(t_{l+1}-t_{l})}-1)^2\nonumber\allowdisplaybreaks\\
  &\times \frac{e^{2\lambda_{A_s}(t-t_{l+1})}-e^{2\lambda_{A_s}(t_{p_2}-t_{l+1})}}{2\lambda_{A_s}}\nonumber\allowdisplaybreaks\\
  &\times\hat{z}(t_l)^T({G_i}^T G_i\otimes K^TK)\hat{z}(t_l)\nonumber\allowdisplaybreaks\\
  &+(1+\beta)\sum_{l=p_2}^{p_3-1}\bigg(\frac{h^i_k+\tau^i_{k+1}}{2{\lambda_{A_s}}^3(t_{l+1}-t_l)} \Big(e^{2\lambda_{A_s}(t_{l+1}-t_l)}\nonumber\allowdisplaybreaks\\
  &-4e^{\lambda_{A_s}(t_{l+1}-t_l)}+3+2\lambda_{A_s}(t_{l+1}-t_l) \Big)\nonumber\allowdisplaybreaks\\
  &+\frac{h^i_k+\tau^i_{k+1}}{{\lambda_{A_s}}^2(t_{l+1}-t_l)}(e^{\lambda_{A_s}(t_{l+1}-t_{l})}-1)^2\nonumber\allowdisplaybreaks\\
  &\times\frac{e^{2\lambda_{A_s}(t-t_{l+1})}-1}{2\lambda_{A_s}}\bigg)
\hat{z}(t_l)^T({G_i}^T G_i\otimes K^TK)\hat{z}(t_l)\nonumber\allowdisplaybreaks\\
  &+(1+\beta)\frac{h^i_k+\tau^i_{k+1}}{2{\lambda_{A_s}}^3(t_{p_3+1}-t_{p_3})} \Big(e^{2\lambda_{A_s}(t-t_{p_3})}\nonumber\allowdisplaybreaks\\
  &-4e^{\lambda_{A_s}(t-t_{p_3})}+3+2\lambda_{A_s}(t-t_{p_3})\Big)\nonumber\allowdisplaybreaks\\
  &\times\hat{z}(t_{p_3})^T({G_i}^T G_i\otimes K^TK)\hat{z}(t_{p_3})\nonumber\allowdisplaybreaks\\
  \leq& \Big((1+\alpha){\sigma_{A}}^2(h+\tau)^2+(1+\frac{1}{\alpha})\omega\Big)(1+\frac{1}{\beta})\nonumber\allowdisplaybreaks\\
  &\times e^{2\lambda_{A_s}(h+\tau)}(t-t_{p_2})  \hat{z}_i(t^i_k+\tau^i_k)^T\hat{z}_i(t^i_k+\tau^i_k)\nonumber\allowdisplaybreaks\\
  &+ (1+\beta)(h^i_k+\tau^i_{k+1})\sum_{l=p_1}^{p_2-1}(t_{l+1}-t_l)(t-t_{p_2})\nonumber\allowdisplaybreaks\\
  &\times e^{2\lambda_{A_s}(t-t_{l})} \hat{z}(t_l)^T({G_i}^T G_i\otimes K^TK)\hat{z}(t_l)\nonumber\allowdisplaybreaks\\
  &+(1+\beta)(h^i_k+\tau^i_{k+1}) \sum_{l=p_2}^{p_3-1}\bigg(\frac{1}{3}(t_{l+1}-t_l)^2\nonumber\allowdisplaybreaks\\
  &\times e^{2\lambda_{A_s}(t_{l+1}-t_l)}+(t_{l+1}-t_l)(t-t_{l+1})e^{2\lambda_{A_s}(t-t_l)} \bigg)\nonumber\allowdisplaybreaks\\
  &\times \hat{z}(t_l)^T({G_i}^T G_i\otimes K^TK)\hat{z}(t_l)\nonumber\allowdisplaybreaks\\
   &+\frac{1}{3}(1+\beta)(h^i_k+\tau^i_{k+1})\frac{(t-t_{p_3})^3}{t_{p_3+1}-t_{p_3}}\nonumber\allowdisplaybreaks\\
   &\times e^{2\lambda_{A_s}(t-t_{p_3})}\hat{z}(t_{p_3})^T({G_i}^T G_i\otimes K^TK)\hat{z}(t_{p_3})\nonumber\allowdisplaybreaks\\
   \leq &\Big((1\!+\!\alpha){\sigma_{A}}^2(h\!+\!\tau)^2\!+\!(1\!+\!\frac{1}{\alpha})\omega\Big)(1+\frac{1}{\beta})e^{2\lambda_{A_s}(h+\tau)} \nonumber\allowdisplaybreaks\\
   &\times\Big(\sum_{l=p_2}^{p_3}(t_{l+1}\!-\!t_{l})\! +\! (t\!-\!t_{p_3})\Big) \hat{z}_i(t^i_k\!+\!\tau^i_k)^T\hat{z}_i(t^i_k\!+\!\tau^i_k)\nonumber\allowdisplaybreaks\\
  &+(1+\beta)(h+\tau)^2e^{2\lambda_{A_s}(h+\tau)}\nonumber\allowdisplaybreaks\\
  &\times\sum_{l=p_1}^{p_2-1} (t_{l+1}-t_l) \hat{z}(t_l)^T({G_i}^T G_i\otimes K^TK)\hat{z}(t_l)\nonumber\allowdisplaybreaks\\
  &+\frac{4}{3}(1+\beta)(h+\tau)^2e^{2\lambda_{A_s}(h+\tau)}\nonumber\allowdisplaybreaks\\
   &\times\sum_{l=p_2}^{p_3-1}(t_{l+1}-t_l) \hat{z}(t_l)^T({G_i}^T G_i\otimes K^TK)\hat{z}(t_l)\nonumber\allowdisplaybreaks\\
   &+\frac{1}{3}(1+\beta)(h+\tau)^2e^{2\lambda_{A_s}(h+\tau)}\nonumber\allowdisplaybreaks\\
    &\times (t-t_{p_3})\hat{z}(t_{p_3})^T({G_i}^T G_i\otimes K^TK)\hat{z}(t_{p_3}).\label{eq:int_z_minus_hat_z}
\end{align}

~\\

{\noindent\it (2) Evaluation of $V(t)$.}

 Let $p_-$ be any index such that $t^i_{p_-}\geq \max_j(t^j_0+\tau^j_0)$ for any $i$. Let $t_{p_0}=\max_i (t^i_{p_-}+\tau^i_{p_-})$,  $t_{p^i_1}=t^i_{p_-}$, and $t_{p^i_{2}}=t^i_{p_-}+\tau^i_{p_-}$. Note that inequality \eqref{eq:int_z_minus_hat_z} is correct for any $t$ with the associated $t_{p_1}$, $t_{p_2}$, and $t_{p_3}$. By inequalities \eqref{eq:dVdtmain} and \eqref{eq:int_z_minus_hat_z}, we have the estimation of $V(t)$ in the following form:
\begin{equation*}
  \begin{split}
    V(t) \leq&  V_0+ \sum_{l=p_0}^{p-1}(t_{l+1}-t_l)\bigg(-\mu+\varepsilon\Big((1+\frac{1}{\alpha})\omega\\
    &+\! (1\!+\!\alpha){\sigma_{A}}^2(h\!+\!\tau)^2\Big)(1\!+\!\frac{1}{\beta})e^{2\lambda_{A_s}\!(h+\tau)}\! \bigg)\hat{z}(t_l)^T\!\hat{z}(t_l)\\
      &+(1+\beta)\varepsilon\frac{7}{3}(h+\tau)^2 e^{2\lambda_{A_s} (h+\tau)}\\
      &\times\sum_{l=p_0}^{p-1}(t_{l+1}-t_l)\hat{z}(t_l)(G^TG\otimes K^TK)\hat{z}(t_l)^T,
  \end{split}
\end{equation*}
where
\begin{align*}
    V_0=&V(t_0)-\mu\int_{t=t_0}^{t_{p_0}}\hat{z}(t)^T\hat{z}(t)\allowdisplaybreaks\\
    &+\varepsilon\sum_{i=1}^m\int_{t_0}^{t_{p^i_2}}(z(t)-\hat{z}(t))^T(z(t)-\hat{z}(t))dt\allowdisplaybreaks\\
&+\varepsilon\sum_{i=1}^m\Bigg(\Big((1+\alpha){\sigma_{A}}^2(h+\tau)^2+(1+\frac{1}{\alpha})\omega\Big)\allowdisplaybreaks\\
&\times(1+\frac{1}{\beta})e^{2\lambda_{A_s}(h+\tau)}  \sum_{l=p^i_2}^{p_0-1}(t_{l+1}-t_{l}) z_i(t^i_k)^Tz_i(t^i_k)\allowdisplaybreaks\\
  &+(1+\beta)(h+\tau)^2e^{2\lambda_{A_s}(h+\tau)}\allowdisplaybreaks\\
  &\times\sum_{l=p^i_1}^{p_0-1} (t_{l+1}-t_l) \hat{z}(t_l)^T({G_i}^T G_i\otimes K^TK)\hat{z}(t_l)\allowdisplaybreaks\\
  &+(1+\beta)\frac{4}{3}(h+\tau)^2e^{2\lambda_{A_s}(h+\tau)}\allowdisplaybreaks\\
   &\times \sum_{l=p^i_2}^{p_0-1}(t_{l+1}-t_l) \hat{z}(t_l)^T({G_i}^T G_i\otimes K^TK)\hat{z}(t_l)\Bigg).
\end{align*}
Let
\begin{equation*}
  \begin{split}
   \Gamma=&\mu- \varepsilon(1+\frac{1}{\beta})((1+\frac{1}{\alpha})\omega\\
   &+(1+\alpha){\sigma_{A}}^2(h+\tau)^2)e^{2\lambda_{A_s}(h+\tau)}\\
   &- \varepsilon(1+\beta)\frac{7}{3}{\sigma_G}^2{\sigma_K}^2(h+\tau)^2e^{2\lambda_{A_s} (h+\tau)}.
  \end{split}
\end{equation*}
 Then by \eqref{eq:dlmaincdtn}, $\Gamma>0$ and
\begin{equation*}
  \begin{split}
     V(t_p) \leq&  V_0- \Gamma\sum_{l=p_0}^{p-1}(t_{l+1}-t_l)\hat{z}(t_l)^T\hat{z}(t_l).
  \end{split}
\end{equation*}
By the lower boundedness of $V(t)$, $\lim_{p\to\infty}\sum_{l=p_0}^{p-1}(t_{l+1}-t_l)\hat{z}(t_l)^T\hat{z}(t_l) $ exists and
\begin{equation}\label{eq:hatzinfty}
  \lim_{p\to\infty}\sum_{l=p}^{\infty}(t_{l+1}-t_l)\hat{z}(t_l)^T\hat{z}(t_l)=0.
\end{equation}
 Denote $e_i(t)=e^i_k$ and $\tilde{z}_i(t)=z_i(t^i_k)$, $t\in[t^i_k+\tau^i_k,t^i_{k+1}+\tau^i_{k+1})$, $i=1,2,\dots,m$, $k=0,1,\dots$. Denote $e(t)=[e_1(t)^T~e_2(t)^T~\dots~e_m(t)^T]^T$ and $\tilde{z}(t)=[\tilde{z}_1(t)^T~\tilde{z}_2(t)^T~\dots~\tilde{z}_m(t)^T]^T$.
By \eqref{eq:hatzinfty} and Assumption 1~(4),
\begin{equation*}
  \lim_{p\to\infty}\sum_{l=p}^{\infty}(t_{l+1}-t_l)e(t_l)^Te(t_l)=0,
\end{equation*}
and
\begin{equation}\label{eq:ztildeinfty}
  \lim_{p\to\infty}\sum_{l=p}^{\infty}(t_{l+1}-t_l)\tilde{z}(t_l)^T\tilde{z}(t_l)=0.
\end{equation}

{\noindent (3) \it Asymptotically Stability.}

For any given $k$, the same indexes $p_1$, $p_2$, and $p_3$ as in the previous part are defined. With the same arguments as in proving inequalities \eqref{eq:leq1} and \eqref{eq:leq2} and by Lemma \ref{yl:singularvalue}, for $t\in[t^i_k+\tau^i_k,t^i_{k+1}+\tau^i_{k+1}]$,
\begin{equation*}
\begin{split}
 z_i(t)-z_i(t^i_k+\tau^i_k)=&(e^{A(t-t^i_k-\tau^i_k)}-I)z_i(t^i_k+\tau^i_k)\\
 &-\int_{t^i_k+\tau^i_k}^{t}e^{A(t-s)}(G_i\otimes K)\hat{z}(s)ds,
  \end{split}
\end{equation*}
and
\begin{align}
  ( z_i(t)-&z_i(t^i_k+\tau^i_k))^T( z_i(t)-z_i(t^i_k+\tau^i_k))\nonumber\\
  \leq & (t^i_{k+1}+\tau^i_{k+1}-t^i_{k}-\tau^i_{k})2{\sigma_{A}}^2e^{2\lambda_{A_s}(h+\tau)}\nonumber\\
  &\times (t-t^i_k-\tau^i_k){z_i(t^i_k+\tau^i_k)}^Tz_i(t^i_k+\tau^i_k)\nonumber\\
  &+(t^i_{k+1}+\tau^i_{k+1}-t^i_{k}-\tau^i_{k})2\lambda_{G_iK} e^{2\lambda_{A_s}(h+\tau)}\nonumber\\
   &\times\sum_{l=p_2}^{p_3-1} (t_{l+1}-t_l)\hat{z}(t_l)^T\hat{z}(t_l)\nonumber\\
  \leq &(t^i_{k+1}+\tau^i_{k+1}-t^i_{k}-\tau^i_{k})4{\sigma_{A}}^2e^{2\lambda_{A_s}(h+\tau)}\nonumber\\
  &\times(t-t^i_k-\tau^i_k){\tilde{z}_i(t^i_k+\tau^i_k)}^T\tilde{z}_i(t^i_k+\tau^i_k)\nonumber\\
  &+(t^i_{k+1}+\tau^i_{k+1}-t^i_{k}-\tau^i_{k})\nonumber\\
  &\times 4{\sigma_{A}}^2e^{2\lambda_{A_s}(h+\tau)}(t-t^i_k-\tau^i_k)\nonumber\\
  &\times(z_i(t^i_k\!+\!\tau^i_k)\! -\!{z}_i(t^i_k))^T(z_i(t^i_k\!+\!\tau^i_k)\! -\!{z}_i(t^i_k))\nonumber\\
  &+(t^i_{k+1}+\tau^i_{k+1}-t^i_{k}-\tau^i_{k})2\lambda_{G_iK} e^{2\lambda_{A_s}(h+\tau)}\nonumber\\
   &\times\sum_{l=p_2}^{p_3-1} (t_{l+1}-t_l)\hat{z}(t_l)^T\hat{z}(t_l),\label{eq:zichange1}
\end{align}
where $\lambda_{G_iK}$ is the largest eigenvalue of ${G_i}^TG_i\otimes K^TK$. Furthermore, since
\begin{equation*}
\begin{split}
  z_i(t^i_k+\tau^i_k) -{z}_i(t^i_k) =&(e^{A\tau^i_k}-I){z}_i(t^i_k)\\
  &-\int_{t^i_k}^{t^i_k+\tau^i_k}e^{A(t-s)}(G_i\otimes K)\hat{z}(s)ds,
  \end{split}
\end{equation*}
  we have that
\begin{equation}\label{eq:zichange2}
\begin{split}
  (t-&t^i_k-\tau^i_k)(z_i(t^i_k+\tau^i_k)- {z}_i(t^i_k))^T(z_i(t^i_k+\tau^i_k) - {z}_i(t^i_k))\\
  \leq & 2{\sigma_{A_s}}^2\tau^2e^{2\lambda_{A_s}\tau}(t-t^i_k-\tau^i_k){\tilde{z}_i(t^i_k+\tau^i_k)}^T\tilde{z}_i(t^i_k+\tau^i_k)\\
  &+2\lambda_{G_iK}\tau e^{2\lambda_{A_s}\tau}(t\!-\!t^i_k\!-\!\tau^i_k) \sum_{l=p_1}^{p_2-1} (t_{l+1}\!-\!t_l)\hat{z}(t_l)^T\hat{z}(t_l).
\end{split}
\end{equation}

By \eqref{eq:hatzinfty} and \eqref{eq:ztildeinfty}, in \eqref{eq:zichange1} and \eqref{eq:zichange2}, all the quantities $(t-t^i_k-\tau^i_k){\tilde{z}_i(t^i_k+\tau^i_k)}^T\tilde{z}_i(t^i_k+\tau^i_k)$,
$\sum_{l=p_2}^{p_3-1} (t_{l+1}-t_l)\hat{z}(t_l)^T\hat{z}(t_l)$, and
$ \sum_{l=p_1}^{p_2-1} (t_{l+1}-t_l)\hat{z}(t_l)^T\hat{z}(t_l)$ converge to $0$ as $k$ goes to infinity. Therefore, we have the following inequality with positive $f(i,t,k)$ converging to $0$ as $k$ goes to infinity:
\begin{equation*}
\begin{split}
  ( z_i(t)-&z_i(t^i_k+\tau^i_k))^T( z_i(t)-z_i(t^i_k+\tau^i_k))\\
  &\leq (t^i_{k+1}+\tau^i_{k+1}-t^i_{k}-\tau^i_{k})f(i,t,k),
\end{split}
\end{equation*}
which implies that the change of $z_i(t)$ over any time intervals with a fixed length will be infinitesimally small when $t$ is sufficiently large. By \eqref{eq:ztildeinfty},
\begin{equation*}
  \lim_{t\to\infty}z_i(t)=0.
\end{equation*}
\endproof

\end{document}